\documentclass{article}

\usepackage{arxiv}

\usepackage{bm}
\usepackage[utf8]{inputenc} 
\usepackage[T1]{fontenc}    
\usepackage{hyperref}       
\usepackage{url}            
\usepackage{booktabs}       
\usepackage{amsfonts}       
\usepackage{nicefrac}       
\usepackage{microtype}      
\usepackage{lipsum}
\usepackage{graphicx}
\usepackage{color}
\usepackage{dsfont}
\usepackage[authoryear]{natbib}
\newcommand{\diag}{\textrm{Diag}}
\newcommand{\Tr}{\textrm{Tr}}

\usepackage{amsmath,amssymb,mathrsfs,amsthm}
\newtheorem{proposition}{Proposition}[section]
\newtheorem{theoreme}{Theorem}[section]
\newtheorem{corollary}{Corollary}[section]
\newtheorem{assumption}{Assumption}[section]

\newtheorem{property}{Property}[section]

\newtheorem{definition}{Definition}[section]

\title{Optimal Data Splitting for Holdout Cross-Validation in Large Covariance Matrix Estimation}

\author{
    Lamia Lamrani$^{1}$, Christian Bongiorno$^{1}$, Marc Potters$^{2}$ \\ 
    $^{1}$Universit\'e Paris-Saclay, CentraleSup\'elec, \\ 
    Laboratoire de Math\'ematiques et Informatique pour la Complexit\'e et les Syst\`emes, 91192 Gif-sur-Yvette, France \\
    $^{2}$Capital Fund Management, 23 Rue de l'Universit\'e, 75007 Paris, France \\
    \texttt{lamia.lamrani@centralesupelec.fr}
}

\begin{document}

\maketitle

\begin{abstract}
Cross-validation is a statistical tool that can be used to improve large covariance matrix estimation. Although its efficiency is observed in practical applications and a convergence result towards the error of the non linear shrinkage is available in the high-dimensional regime, formal proofs that take into account the finite sample size effects are currently lacking. To carry on analytical analysis, we focus on the holdout method, a single iteration of cross-validation, rather than the traditional $k$-fold approach. We derive a closed-form expression for the expected estimation error when the population matrix follows a white inverse Wishart distribution, and we observe the optimal train-test split scales as the square root of the matrix dimension. For general population matrices, we connected the error to the variance of eigenvalues distribution, but approximations are necessary. In this framework and in the high-dimensional asymptotic regime, both the holdout and $k$-fold cross-validation methods converge to the optimal estimator when the train-test ratio scales with the square root of the matrix dimension which is coherent with the existing theory. 
\end{abstract}

\section{Introduction}

Covariance estimation is a major question in statistics that finds applications in many areas such as physics \citep{taylor2013putting}, neuroscience \citep{beltrachini2013shrinkage,zhuang2020technical}, signal processing \citep{ottersten1998covariance} and finance \citep{laloux1999noise,ledoit2004shrunk, pantaleo2011improved}. A simple answer could be to use the sample estimator; however, it is largely inefficient in high dimensions when the order of the matrix $n$ is comparable to the number of observations $t$ of the data \citep{ledoit2004shrunk}. More generally, it is known that the amount of information lost when the sample estimator $\bm{E}$ is used to approximate the population covariance $\bm{C}$ depends only on $q=\frac{n}{t}$ for any population matrix with Gaussian data in the high-dimension setting \citep{bongiorno2023quantifying}. The estimator coincides with the population when $q=0$, then the larger is $q$ the larger is the error \citep{ledoit2004shrunk, ledoit2022power}.

To address the lack of robustness in the sample covariance estimator, researchers have extensively studied Rotational Invariant Estimators (RIE) \citep{bun2017cleaning,ledoit2011eigenvectors,bun2016rotational}. Today, RIEs represent the most widely used approach for covariance cleaning. In simple terms, a RIE is built by maintaining the eigenvectors of the sample covariance and correcting the sample eigenvalues. The reasoning is that when no information is available on the direction of the eigenvectors of the population covariance matrix, one can assume that each basis of eigenvectors has the same probability of appearing, therefore one keeps the sample eigenvectors and focuses on estimating the eigenvalues such that it minimizes a function of choice such as the Mean Square Error (MSE) \citep{ledoit2011eigenvectors}, the Kullback-Leibler divergence \citep{bongiorno2023optimal} or the Stein's loss \citep{ledoit2022quadratic}.
In the last decades, many RIEs linked with Random Matrix Theory (RMT) were proposed to clean the noise of the sample estimator (see \citep{bun2017cleaning} and \citep{ledoit2022power} for a review). One of the first RMT methods for cleaning is known as eigenvalue clipping. This method suggests that all the eigenvalues outside the sharp limit $(\lambda_{-}, \lambda_{+})$ of the Marcenko-Pastur (MP) distribution should be kept, while the ones that fall within such limit should be regarded as noise and should be shrunk to a single average value \citep{laloux1999noise, bouchaud2009financial}. However, the limits of the MP distribution depend only on the measured aspect ratio $q$, i.e., the size of the matrix $n$ divided by the number of data points $t$. Notably, the measured aspect ratio is not always equal to the effective ratio that best fits the empirical spectral density to the MP distribution \citep{laloux1999noise,bun2017cleaning}. The other widely used method for cleaning covariance matrices is linear shrinkage. The idea is to linearly shrink the sample eigenvalues towards the identity \citep{ledoit2004shrunk,Potters2020AFC}; although this approach works in many practical applications, it is theoretically justified only under some circumstances such as when the population covariance is a white inverse Wishart \citep{Potters2020AFC}. More recently, the optimal shrinkage correction was analytically derived by Ledoit and P\'ech\'e under certain assumptions, including the $12$th absolute central moment finite for the noise and stationary data \citep{ledoit2011eigenvectors}; however, the computation of this quantity from their equation was not straightforward. Numerical solutions, known as Non-Linear Shrinkage (NLS), were implemented and further developed by Ledoit and Wolf \citep{ledoit2012nonlinear, ledoit2017direct}, making it more practical to apply. The NLS, under the high-dimensional limit, converges to the oracle estimator, which is the ideal estimator that minimizes the MSE with the population covariance matrix. It is worth remarking that the direct estimation of the oracle is unattainable in practice since it relies on the knowledge of the population covariance matrix, which is unknown  \citep{ledoit2011eigenvectors}. 

In practical scenarios, where the data may not be Gaussian or stationary, alternative methods inspired by machine learning find their best regime of applicability. Techniques such as Cross-Validation (CV), some based on factor models \citep{fama1993common,fan2008high}, and others employing Bayesian methods with prior distributions for estimating the covariance matrix \citep{daniels1999nonconjugate} can, in principle, outperform theoretically optimal methods, such as those derived by Ledoit and P\'ech\'e, under these complex scenarios. However, there is no theoretical evidence to support this claim, as deriving analytical results for these methods is highly complex. To fix this gap, the focus of this work is specifically on covariance cleaning through CV.

CV is one of the most widely used machine learning methods for model selection and evaluation and belongs to the family of Monte-Carlo methods (see \citep{berrar2019cross} for a review of the main CV procedures). Its applications include a wide range of areas such as ecology \citep{yates2023cross}, medical imaging \citep{bradshaw2023guide}, psychology \citep{de2020cross}, and finance \citep{tan2020estimation,lam2020high}, to quote a few. Part of the reason CV is widely used is because it almost needs no assumptions on the data, it helps reduce the risk of over-fitting for model selection and its efficiency is undoubtedly in practice \citep{berrar2019cross,tan2020estimation}.
There are several CV procedures; however, the core principle of any of those involves dividing the dataset into separate training and testing subsets, also called in-sample and out-of-sample. The model is trained on the training subset and then evaluated on the testing subset. This process helps to assess how well the model generalizes to unseen data and is particularly useful to detect overfitting \citep{moore2001cross}. When overfitting occurs, the model may perform well on the training set but will show a noticeable drop in performance on the testing set. The most popular CV methods are the $k$-fold CV, the Leave-One-Out (LOO) CV, which is a particular case of the former \citep{berrar2019cross}, and the holdout, also called validation or simple cross-validation \citep{berrar2019cross}. The holdout procedure consists of splitting a single time the data set into a train set and a test set. Therefore, this method consists of a single round of a $k$-fold CV. The recommendation in the literature is to use $10$ to $30\%$ of the data points for the test set and the rest for the train set \citep{berrar2019cross}. Differently from the $k$-fold and LOO CV, the holdout can respect the causality of time series if the train and test data are ordered and if the test data is chosen posterior to the train. 
Practitioners should choose their approach based on the system specificity and the estimation strategy they aim for.  For example, the LOO CV is adequate for bias correction but has a very high variance. On the other hand, if stability is important, one can run a $k$-fold procedure with a small k usually between $5$ and $10$, with $k=10$ being the most common one \citep{marcot2021optimal, wong2019reliable, yadav2016analysis}.

Covariance cleaning approaches inspired by CV were proposed in~\citep{abadir2014design,bartz2016advances,Lam2016}. 
Some authors even claim that the $k$-fold CV outperforms the NLS when applied to financial data \citep{bartz2016advances,morstedt2024cross}. 
The flexibility of this numerical estimation allows for simple modifications such as including exponential smoothing in CV \citep{tan2020estimation}, or pre-filtering the data using hierarchical clustering \citep{bongiorno2022statistically}.
In its original formulation, a LOO CV was proposed in \cite{ledoit2012nonlinear}; however, 
\cite{bartz2016advances} and \cite{Lam2016} reasoned that a $k$-fold would improve the estimation. The current procedure consists of dividing the dataset into $k$-folds and finding the average eigenvalue correction of the train eigenvalues to best fit the test set. Analogously to the $k$-fold problem, as $k$ increases, the train set becomes larger, but the test set shrinks, leading to a higher generalization variance. Finding the optimal way to split the data is challenging, especially because the underlying theory is not well understood. In the literature, the recommendation is again to use $k$ between $5$ and $10$ \citep{bartz2016advances, bun2018overlaps,tan2020estimation}. A first attempt to describe the CV covariance cleaning was done by \cite{abadir2014design}, who studied a type of estimator, but only when the matrix dimension is fixed or becomes negligible with respect to the sample size as they tend to infinity. 
More recently, \cite{Lam2016,lam2015supplement} obtained a conservative convergence result for CV estimators, showing that the Frobenius error of the holdout estimator  (named NERCOME) converges to the corresponding error of the NLS.  Under \cite{Lam2016}'s assumptions, the holdout error as a function of the split should exhibit a plateau in the high-dimension regime. However, such a plateau constitutes a rough approximation in the finite-sample case, whereas our result reveals a distinct error minimum. In fact, some authors have raised concerns about the performance of these splits in the high but finite dimensional setting \citep{engel2017overview,tan2025estimation} and \cite{Lam2016} himself recommends to fit the optimal split directly using the data.

In this paper, we present the first step toward gaining a better understanding of the performance of CV for large but finite covariance matrix estimation using random matrix theory tools. We compute the element-wise expected MSE between the estimator and the population covariance of a single round for a $k$-fold CV, which coincides with the Frobenius error of the holdout estimator. For the general case, the error requires numerical approximations \citep{ledoit2012nonlinear, ledoit2017direct,bun2017cleaning}.  To carry on this estimation analytically, instead, we assume a white inverse Wishart distributed population covariances. Our motivation is to describe analytically a case with satisfies Lam's convegence result assumptions for which we do not obtain the expected plateau for the convergence region in the high but finite dimensional regime. The inverse Wishart ensemble makes it possible to carry on analytical computations as when the datas are Gaussian, both the Bayesian estimator and linear shrinkage coincide with the oracle estimator of \cite{ledoit2012nonlinear} \citep{alvarez2014bayesian,Potters2020AFCbis}.
We believe that studying the holdout method in the finite setting is a necessary step to understand the $k$-fold CV better. Furthermore, while the holdout method may be less efficient than $k$-fold CV on stationary data, its capacity to preserve temporal order could make it particularly useful for many practical applications involving non-stationary data \citep{bongiorno2023filtering}. Therefore, in this paper, we focus on the computation of the expected error of the holdout estimator, deriving an analytical expression in the white inverse Wishart case. Furthermore, this result allows us to find the optimal split between the train and test sets that minimizes the Frobenius error, observing that the optimal splits in the high-dimensional regime are not equivalent in the high but finite setting. 

This paper is organized as follows: in Section \ref{Section 2}, we set the definitions, in Section \ref{Section 3}, we derive analytically the expected Frobenius error of the holdout estimator of for the general case and for a white inverse Wishart population covariance and Gaussian data as well as the optimal split that minimizes the holdout error. Then we compare the error of the holdout estimator with the error of the oracle estimator.

\section{Definitions} \label{Section 2}

\subsection{Multivariate Distributions}

In this section, we introduce the fundamental concepts related to covariance matrix estimation using samples drawn from a multivariate normal distribution. We begin by defining the Wishart distribution and its relationship to the sample covariance matrix, which serves as the maximum likelihood estimator under normality assumptions.

Let $\bm{x} \in \mathbb{R}^{n}$ be a random vector following a multivariate normal distribution with zero mean and population covariance matrix $\bm{\Sigma}$, denoted as $\bm{x} \sim \mathcal{N}(\bm{0}, \bm{\Sigma})$. Let $\bm{x}_1, \dots, \bm{x}_t$ be $t$ independent and identically distributed (i.i.d.) observations of $\bm{x}$. The data matrix $\bm{X} \in \mathbb{R}^{n \times t}$ is formed by concatenating these observations
\begin{equation} \bm{X} = \{ \bm{x}_1 , \bm{x}_2 , \cdots , \bm{x}_t \}. \end{equation} 

Then sample covariance matrix $\bm{E}$ is defined as \begin{equation} \label{eq:sample_cov_gaussian} \bm{E} = \frac{1}{t} \bm{X} \bm{X}^\top. \end{equation}

Under the assumption of normally distributed data, $\bm{E}$ is the maximum likelihood estimator of the population covariance matrix $\bm{\Sigma}$ \citep{muirhead2009aspects}. Moreover, if $\bm{x}_1, \dots, \bm{x}_t$ are i.i.d. samples from $\mathcal{N}(\bm{0}, \bm{\Sigma})$, then the sample covariance matrix $\bm{E}$ follows a scaled Wishart distribution: \begin{equation}  \bm{E} \sim \frac{1}{t} \mathcal{W}_n(t, \bm{\Sigma}), \end{equation} where $\mathcal{W}_n(t, \bm{\Sigma})$ denotes the Wishart distribution with $t$ degrees of freedom and scale matrix $\bm{\Sigma}$.

In this paper, we will mostly consider the high-dimensional limit which refers to the asymptotic regime where both $n$ and $t$ tend to infinity while the aspect ratio $q$ remains fixed \begin{equation}\label{def:high_dim_limit} n, t \to \infty \quad \text{with} \quad q = \frac{n}{t} \quad \text{fixed}. \end{equation}

Next, we introduce the inverse Wishart distribution, which is the distribution of the inverse of a Wishart-distributed matrix. We define the white inverse Wishart distribution and its scaled version, which will be used as the model for the population covariance matrix in our analysis.

\begin{definition} Let $\bm{W} \sim \mathcal{W}_n(t^*, \bm{\Psi})$ be a Wishart-distributed random matrix with $t^*$ degrees of freedom and scale matrix $\bm{\Psi}$. If $t^* > n - 1$, the inverse of $\bm{W}$ follows an inverse Wishart distribution, denoted as \begin{equation} \bm{\Sigma} = \bm{W}^{-1} \sim \textrm{Inv-}\mathcal{W}_n(t^*, \bm{\Psi}^{-1}). \end{equation} 

A white inverse Wishart distribution is an inverse Wishart distribution where the scale matrix is proportional to the identity matrix, i.e., $\bm{\Psi} = \psi \bm{\mathds{1}}$ for some scalar $\psi > 0$. In this case, the inverse Wishart-distributed matrix $\bm{\Sigma}\sim \textrm{Inv-}\mathcal{W}_n^{-1}(t^*, \psi^{-1} \bm{\mathds{1}})$ is called white because its expected value is proportional to the identity matrix. \end{definition}

To ensure that the expected value of $\bm{\Sigma}$ is the identity matrix, we choose the scale parameter appropriately.

Then, for $\bm{\Sigma} \sim \mathcal{W}_n^{-1}(t^*, (t^* - n - 1) \bm{\mathds{1}})$ with $t^* > n + 1$, we obtain that\begin{equation}
\label{eq:exp_trace_invW}
\mathbb{E}[\bm{\Sigma}] = \bm{\mathds{1}}. \end{equation}

Indeed, the expected value of an inverse Wishart-distributed matrix is given by \begin{equation} \mathbb{E}[\bm{\Sigma}] = \frac{\bm{\Psi}}{t^* - n - 1}. \end{equation} By setting $\bm{\Psi} = (t^* - n - 1) \bm{\mathds{1}}$, we obtain $\mathbb{E}[\bm{\Sigma}] = \bm{\mathds{1}}$.

This choice of scaling ensures that the population covariance matrix $\bm{\Sigma}$ has an expected value equal to the identity matrix, making it a convenient model for theoretical analysis.

To simplify the notation we define the following parametrization of the white inverse Wishart.
\begin{definition}
\label{def:white_inv}
Let $q^*=n/t^*$ and $p=q^*/(1-q^*)$, then we refer to
\begin{equation}
    \bm{\Sigma} \sim \mathcal{W}^{-1}_{np} =  \textrm{Inv-}\mathcal{W}_{n}(t^*,(t^*-n-1) \mathds{1})
\end{equation}
\end{definition}

In this work, we will use the white inverse Wishart distribution as the model for the population covariance matrix $\bm{\Sigma}$ of multivariate normal distributed data defined in Eq.\ \eqref{eq:sample_cov_gaussian}.

\subsubsection{Estimation of Parameters}

We now discuss how to estimate parameters related to the covariance matrices, particularly focusing on the trace and the normalized trace of their powers.

\begin{definition} The normalized trace of a matrix $\bm{A} \in \mathbb{R}^{n \times n}$ is defined as \begin{equation} \tau(\bm{A}) = \frac{1}{n} \Tr(\bm{A}) \end{equation} 
where $\Tr(\bullet)$ in the trace of a matrix.

\end{definition}

If $\bm{\Sigma} \sim \mathcal{W}^{-1}_{np}$ is a matrix extracted from a white inverse Wishart with parameters $\{n,p\}$ then, in the high-dimensional limit of definition ~\eqref{def:high_dim_limit} with $p$ negligible compared to $n$, we have

\begin{equation}
\label{eq:sigma_square}\mathbb{E}\left[\tau(\bm{\Sigma}^2)\right] = 1+p.
\end{equation}

To derive the normalized trace of an inverse Wishart matrix, let us use that the variance of the elements of $\bm{\Sigma}$ is \citep{haff1979identity}

\begin{equation}\label{eq:var_invwishart}
    \mathbb{V}\left[\Sigma_{ij}\right] = \frac{p (n-p) +  p (n+p) \delta_{ij}}{n (n - 3p)},
\end{equation}
Therefore, by using the expected value of the normalized trace of $\bm{\Sigma}$ written in eq.\ \eqref{eq:exp_trace_invW}, we can derive the 
\begin{equation}
    \mathbb{E}\left[\tau(\bm{\Sigma}^2)\right] = \frac{1}{n}\sum_{ij} \left( \mathbb{V}\left[\Sigma_{ij}\right] +  \mathbb{E}\left[\Sigma_{ij}\right]^2\right) = \frac{(n-p)(n p-p + n)}{n ( n- 3 p)}.\label{eq:tsq}
\end{equation}
In the high-dimensional limit, defined in definition ~\eqref{def:high_dim_limit}, the former equation simplifies to the known formula \citep{Potters2020AFC}  
\begin{equation}
    \mathbb{E}\left[\tau(\bm{\Sigma}^2)\right] = \lim_{n \to \infty} \frac{(n-p)(n p-p + n)}{n ( n- 3 p)} =  1+p.
\end{equation}

Let us note that from equation ~\eqref{eq:var_invwishart}, $p<n/3$ is a necessary condition to have a finite variance.

To estimate $p$ from a sample covariance $\bm{E}$ with population covariance $\bm{\Sigma}$ we can use the expected value of the trace of $\bm{E}^{2}$. Let $\bm{E} = \frac{1}{t} \bm{X} \bm{X}^\top$ be the sample covariance matrix based on $t$ i.i.d. samples from $\mathcal{N}(\bm{0}, \bm{\Sigma})$, where $\bm{\Sigma}$ is the population covariance matrix.  Then, in the high dimensional limit, 
\begin{equation}  
\label{eq:trace_samplecov_square}
\mathbb{E}\left[ \tau(\bm{E}^2) \right] = \mathbb{E}\left[ \tau(\bm{\Sigma}^2) \right] + q, \end{equation} where $q = n/t$ is the aspect ratio of the data matrix \citep{Potters2020AFC}. 

\subsection{Covariance matrix estimation}

In this section, we define the theoretical background for two of the most popular families of covariance matrix estimators, the RIE and the CV estimators.

\subsubsection{Oracle estimator}

\begin{definition}
$\bm{\Xi}(\bm{E})$ is a RIE of the population covariance matrix $\bm{\Sigma}$, where $\bm{E}$ denote the sample covariance over the data. This estimator satisfies the following distributional equality for all orthogonal matrices $\bm{O}$\citep{bun2016rotational}
\begin{equation}
 \bm{\Xi}(\bm{O}\bm{E}\bm{O}^{T})\overset{\textrm{distr}}{=}\bm{O}\Xi(\bm{E}) \bm{O}^{T}.   
\end{equation}
\end{definition}

The Frobenius distance is often used to evaluate the performance of a matrix estimator as it corresponds to the element-wise mean-squared error between the estimator and the population covariance.

\begin{definition}
    The normalized Frobenius estimation error between an estimator $\bm{\Xi}$ and a population matrix $\bm{\Sigma}$ is \citep{Potters2020AFC}
    \begin{equation}
    \Vert \bm{\Xi} - \bm{\Sigma}\Vert^2_{\textrm{F}}=\tau ((\bm{\Xi} - \bm{\Sigma})^2).
    \end{equation}
\end{definition}

Let us mention that the expected Frobenius error of the sample covariance Wishart distributed is known and does not depend on the distribution of the population covariance \citep{Potters2020AFC}.For $\bm{E}$ a sample covariance matrix from multivariate normal data $X \in \mathbb{R}^{n \times t}$ with population covariance $\bm{\Sigma}$, in the high-dimensional limit, the expected Frobenius error between $\bm{E}$ and $\bm{\Sigma}$ is equal to 

\begin{equation}
\label{eq:err_sample_cov}
\mathbb{E}\left[\tau((\bm{E}-\bm{\Sigma})^2)\right]=q.
\end{equation}

In order to define the oracle estimator of the covariance matrix, let us first recall that $\bm{E}$ can be diagonalized on an orthogonal basis. Its eigenvectors are called the sample eigenvectors and its eigenvalues are contained in the diagonal matrix $\bm{\Lambda}$ and are non-negative
\begin{equation}
    \bm{E}=\bm{V\Lambda V}^{T},
\end{equation}
with the columns of $\bm{V}$ arranged so that the associated eigenvalues in $\bm{\Lambda}$ increase sequentially.

Let us now define the oracle estimator.
\begin{definition}
The RIE which minimizes the Frobenius error with the population covariance $\bm{\Sigma}$ is called the oracle estimator and is equal to \citep{ledoit2011eigenvectors}

\begin{equation}
    \bm{\Xi}^{O}(\bm{E},\bm{\Sigma})=\bm{V}\diag(\bm{V}^{T}\bm{\Sigma V})\bm{V}^{T},
\end{equation}

where $\bm{V}$ are the sample eigenvectors of $\bm{E}$ and $\diag(\bullet)$ is the operator that sets the out-of-diagonal entries to zero.

The oracle eigenvalues are then
\begin{equation}\label{eq:oracle}
    \bm{\Lambda}^O(\bm{E},\bm{\Sigma})=\diag(\bm{V}^{T}\bm{\Sigma V}).
\end{equation}

\end{definition}

This estimator is called the ``oracle'' estimator because it still depends on the unknown population covariance matrix that we want to estimate. Ledoit and P\'ech\'e derived an estimator that depends only on observable quantities and converges to the oracle in the high-dimensional limit \citep{ledoit2011eigenvectors}.
 
\begin{theoreme}
Let us call $\bm{E}$ a sample covariance of size $n\times n$ generated by $t$ i.i.d. observations with covariance population matrix $\bm{\Sigma}$. Let us assume that the noise used to generate $\bm{E}$ is independent of $\bm{\Sigma}$, has a finite $12$th absolute central moment and that the distribution of $\bm{\Sigma}$ converges for $n$ tends to $\infty$ at every point of continuity to a non-random distribution which defines a probability measure and whose support is included in a compact interval.  Let $q=n/t$ and $g_{\bm{E}}$ the Stieltjes transform of $\bm{E}$. For $\lambda$, an eigenvalue of $\bm{E}$, let $\xi_{\lambda}(\bm{E})$ be the corresponding eigenvalue of the oracle's estimator. Then, in the high-dimensional limit,  for $z=\lambda+i\eta$ with $\lambda\in Sp(\bm{E})$ and $\eta>0$ \citep{ledoit2011eigenvectors}
\begin{equation}
\label{ledoit_peche}
\xi_{\lambda}(\bm{E})=\lim\limits_{\eta \rightarrow 0^{+}} \frac{\lambda}{\vert 1-q+q\lambda g_{\bm{E}}(\lambda+i\eta)\vert^2}.
\end{equation}
\begin{flushright}
(Ledoit-P\'ech\'e formula)
\end{flushright}
\end{theoreme}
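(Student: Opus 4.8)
The natural route is the resolvent / deterministic‑equivalent approach (in essence the Ledoit--P\'ech\'e argument). Fix the convention $g_{\bm{E}}(z)=\tfrac1n\Tr(z\bm{\mathds{1}}-\bm{E})^{-1}=\int(z-x)^{-1}\rho_{\bm{E}}(x)\,dx$, and define $\Theta(z):=\tfrac1n\Tr\!\big[\bm{\Sigma}\,(z\bm{\mathds{1}}-\bm{E})^{-1}\big]$. Writing $\bm{E}=\bm{V}\bm{\Lambda}\bm{V}^{T}$ with sample eigenvectors $\bm{v}_{k}$ (columns of $\bm{V}$) and recalling that the $k$-th oracle eigenvalue is $\bm{v}_{k}^{T}\bm{\Sigma}\bm{v}_{k}$ by Eq.~\eqref{eq:oracle},
\begin{equation}
\Theta(z)=\frac1n\sum_{k=1}^{n}\frac{\bm{v}_{k}^{T}\bm{\Sigma}\bm{v}_{k}}{z-\lambda_{k}},\qquad g_{\bm{E}}(z)=\frac1n\sum_{k=1}^{n}\frac{1}{z-\lambda_{k}}.
\end{equation}
Taking $z=\lambda+i\eta$ and imaginary parts, both $\Im\Theta$ and $\Im g_{\bm{E}}$ are sums of Lorentzians of width $\eta$ centred at the $\lambda_{k}$; letting $\eta\to0^{+}$ after $n\to\infty$, their ratio localises on $\lambda_{k}\simeq\lambda$ and converges to the local average of $\bm{v}_{k}^{T}\bm{\Sigma}\bm{v}_{k}$, i.e. to the limiting oracle eigenvalue $\xi_{\lambda}(\bm{E})$. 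Hence it suffices to prove
\begin{equation}
\xi_{\lambda}(\bm{E})=\lim_{\eta\to0^{+}}\frac{\Im\Theta(\lambda+i\eta)}{\Im g_{\bm{E}}(\lambda+i\eta)}
\end{equation}
and to evaluate $\Theta$ in the high-dimensional limit.

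Next I would invoke the Marchenko--Pastur/Silverstein deterministic equivalent for $\bm{E}=\bm{\Sigma}^{1/2}\big(\tfrac1t\bm{Z}\bm{Z}^{T}\big)\bm{\Sigma}^{1/2}$ (conditioning on $\bm{\Sigma}$): under the stated independence/moment/compact-support hypotheses, for any bounded deterministic $\bm{A}$ the difference of the two normalized traces below vanishes as $n\to\infty$,
\begin{equation}
\frac1n\Tr\!\big[\bm{A}\,(z\bm{\mathds{1}}-\bm{E})^{-1}\big]\;\simeq\;\frac{1}{h(z)}\,\frac1n\Tr\!\Big[\bm{A}\,\big(\tfrac{z}{h(z)}\bm{\mathds{1}}-\bm{\Sigma}\big)^{-1}\Big],\qquad h(z):=1-q+q\,z\,g_{\bm{E}}(z),
\end{equation}
with the self-consistency $g_{\bm{E}}(z)=\tfrac1{h(z)}g_{\bm{\Sigma}}(z/h(z))$. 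Applying this with $\bm{A}=\bm{\Sigma}$ (bounded by the compact-support hypothesis) and using $\bm{\Sigma}(w\bm{\mathds{1}}-\bm{\Sigma})^{-1}=-\bm{\mathds{1}}+w(w\bm{\mathds{1}}-\bm{\Sigma})^{-1}$ at $w=z/h(z)$ together with the self-consistency equation gives, after simplification,
\begin{equation}
\Theta(z)\;\simeq\;\frac{z\,g_{\bm{E}}(z)-1}{h(z)}\;=\;\frac1q\Big(1-\frac1{h(z)}\Big),
\end{equation}
the last equality using $q\,z\,g_{\bm{E}}(z)=h(z)-1+q$.

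Then I would take imaginary parts: $\Im h(z)=q\,\Im\!\big(z\,g_{\bm{E}}(z)\big)$ and $\Im\Theta(z)=-\tfrac1q\Im\tfrac1{h(z)}=\tfrac{\Im h(z)}{q\,|h(z)|^{2}}=\tfrac{\Im(z\,g_{\bm{E}}(z))}{|h(z)|^{2}}$, hence
\begin{equation}
\frac{\Im\Theta(z)}{\Im g_{\bm{E}}(z)}=\frac{\Im\!\big(z\,g_{\bm{E}}(z)\big)}{|h(z)|^{2}\,\Im g_{\bm{E}}(z)}.
\end{equation}
With $z=\lambda+i\eta$ and $g_{\bm{E}}=\Re g_{\bm{E}}+i\,\Im g_{\bm{E}}$ one has $\Im(z\,g_{\bm{E}})=\lambda\,\Im g_{\bm{E}}+\eta\,\Re g_{\bm{E}}$, so $\Im(z\,g_{\bm{E}})/\Im g_{\bm{E}}=\lambda+\eta\,\Re g_{\bm{E}}/\Im g_{\bm{E}}\to\lambda$ as $\eta\to0^{+}$ on the bulk, where $\Im g_{\bm{E}}(\lambda+i0^{+})=-\pi\rho_{\bm{E}}(\lambda)$ is bounded away from $0$; likewise $z\,g_{\bm{E}}(z)\to\lambda\,g_{\bm{E}}(\lambda+i0^{+})$. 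Combining these yields $\xi_{\lambda}(\bm{E})=\lambda\,/\,\big|1-q+q\lambda g_{\bm{E}}(\lambda+i0^{+})\big|^{2}$, which is the claim.

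The main obstacle is not the algebra of the last two steps but the two analytic inputs behind the first: (i) showing the joint empirical measure $\tfrac1n\sum_{k}\delta_{\lambda_{k}}\otimes\delta_{\bm{v}_{k}^{T}\bm{\Sigma}\bm{v}_{k}}$ concentrates, so that the local average of $\bm{v}_{k}^{T}\bm{\Sigma}\bm{v}_{k}$ is a genuine deterministic function $\xi_{\lambda}$ -- this is where the finite $12$th‑moment hypothesis enters, controlling the fluctuations of the bilinear forms $\bm{v}_{k}^{T}\bm{\Sigma}\bm{v}_{k}$; and (ii) justifying the exchange of the limits $n\to\infty$ and $\eta\to0^{+}$, i.e. making the deterministic equivalent hold uniformly enough as $\eta$ approaches the real axis, which needs quantitative control of the error terms and a lower bound on $|\Im g_{\bm{E}}|$ on the bulk (with separate treatment of spectral edges and possible outliers). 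These are precisely the technical estimates carried out in \citep{ledoit2011eigenvectors}.
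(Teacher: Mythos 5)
This theorem is not proved in the paper at all: it is quoted verbatim from \citet{ledoit2011eigenvectors}, so there is no internal proof to compare against. Your sketch reconstructs precisely the original Ledoit--P\'ech\'e resolvent argument, and the algebra checks out: with $g_{\bm{E}}(z)=\frac1n\Tr(z\bm{\mathds{1}}-\bm{E})^{-1}$ and $h(z)=1-q+qzg_{\bm{E}}(z)$, the deterministic equivalent $(z\bm{\mathds{1}}-\bm{E})^{-1}\simeq h(z)^{-1}\bigl(\tfrac{z}{h(z)}\bm{\mathds{1}}-\bm{\Sigma}\bigr)^{-1}$ does give $\Theta(z)\simeq\frac1q\bigl(1-\frac1{h(z)}\bigr)$, and the imaginary-part ratio correctly localises the bilinear forms $\bm{v}_k^{T}\bm{\Sigma}\bm{v}_k$ onto $\xi_\lambda=\lambda/|h(\lambda+i0^{+})|^{2}$. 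You are also right that the genuine content lies in your points (i) and (ii) --- concentration of the weighted spectral measure (where the 12th-moment assumption enters) and the uniform control needed to exchange $n\to\infty$ with $\eta\to0^{+}$ in the bulk --- and deferring those estimates to the cited reference is exactly what the paper itself does, so your proposal is as complete as the paper's treatment and follows the same (original) route.
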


\cite{ledoit2012nonlinear} claim that, even when the data matrices are finite but large, the Ledoit-P\'ech\'e eigenvalues still provide a good approximation of the oracle eigenvalues. 

In the high-dimensional limit for a white inverse Wishart population covariance, the Ledoit-P\'ech\'e formula is a linear shrinkage that coincides with the oracle estimator. It is worth mentioning that the Bayesian estimator converges to the same formula in the high-dimensional limit \citep{Potters2020AFCbis}.  In such conditions, the expected Frobenius error between the estimator and the population covariance can be derived.
\begin{property}
\label{prop:oracle_invW_gaussian_error}
    For $\bm{\Sigma}$, an inverse Wishart population covariance matrix of parameters $n$ and $p$ and $\bm{E}$ the sample covariance of $\bm{\Sigma}$ generated on $t$ i.i.d. centered and Gaussian observations. Then, in the high-dimensional limit, the expected Frobenius error between the oracle estimator and $\bm{\Sigma}$ is \citep{Potters2020AFCbis, llamrani}
\begin{equation}
\label{er_lp}
   \mathbb{E}\left[\tau((\bm{\Xi}^{\textrm{O}}(\bm{E},\bm{\Sigma})-\bm{\Sigma})^2)\right] =\frac{pq}{p+q}.
\end{equation}
\end{property}

An important remark is that the expected Frobenius error of the oracle estimator is smaller than the one from the sample covariance written in equation~\eqref{eq:err_sample_cov} for any $q\neq0$ even though the sample estimator is the maximum likelihood estimator.

\subsubsection{Cross-validation estimators}

In this section, we define the two main CV methods: the holdout method and the $k$-fold CV. Additionally, we introduce two variations of these methods, one with and one without rotational invariance \citep{abadir2014design,Lam2016}. To explain these methods, we need first to define the index partitions.

\begin{definition}\label{def:partition}
    Let $t$ be the total number of observations, and let $t_\textrm{out}$ be an integer between $1$ and $t-1$. Define  $t_\textrm{in} = t - t_\textrm{out}$. We partition the set of indices $\mathcal{I} = \{1, 2, \dots, t\}$ into $2$ disjoint subsets $\mathcal{I}_\textrm{out}$ and $\mathcal{I}_\textrm{in}$ of size $t_{\textrm{out}}$ and $t_{\textrm{in}}$ respectively, such that

\begin{equation}
    \mathcal{I}_\textrm{in} \cup \mathcal{I}_\textrm{out} = \mathcal{I}, \quad   \mathcal{I}_\textrm{in} \cap \mathcal{I}_\textrm{out}  =\emptyset
\end{equation}
\begin{flushright}
(index partition)
\end{flushright}
\end{definition}
\begin{definition}\label{def:kpartition}
    Let $t $ be the total number of observations, and let $t_{\textrm{out}}$ be an integer between $1$ and $t-1$ such that $t_{\textrm{out}}$ divides $t$. Define $k = t/t_{\textrm{out}}$ and $t_{\textrm{in}} = t - t_{\textrm{out}}$. For the k-fold, we partition the set of indices $\mathcal{I} = \{1, 2, \dots, t\}$ into $k$ disjoint subsets $\{\mathcal{I}_{\textrm{out}, l}\}_{l=1}^{k}$, each of size $t_{\textrm{out}}$, such that

\begin{equation}
\bigcup_{l=1}^{k} \mathcal{I}_{\textrm{out}, l} = \mathcal{I}, \quad \mathcal{I}_{\textrm{out}, l} \cap \mathcal{I}_{\textrm{out}, m} = \emptyset \text{ for } l \neq m.
\end{equation}
For each $l$, the training set indices are defined as
\begin{equation}
    \mathcal{I}_{\textrm{in}, l} = \mathcal{I} \setminus \mathcal{I}_{\textrm{out}, l}.
\end{equation} 
\begin{flushright}
($k$-fold index partition)
\end{flushright}

\end{definition}

Now, let us define two versions of the holdout estimator.
\begin{definition}
    Let us consider $\bm{X}=\{\bm{x}_{1}, ..., \bm{x}_{t}\}$ an i.i.d. set of centered observations of covariance $\bm{\Sigma}$.
    Let $t_\textrm{out}$ an integer between $1$ and $t-1$, and  $\{\mathcal{I}_\textrm{in},\mathcal{I}_\textrm{out}\}$ an index partition (definition ~\eqref{def:partition}) defined from $t_\textrm{out}$.  Let us call $\bm{E}_\textrm{in}$ the sample covariance over the subset of observations $\bm{X}_\textrm{in}=\{\bm{x}_i|\, i \in \mathcal{I}_\textrm{in}\}$ (train set) and $\bm{E}_\textrm{out}$ the sample covariance on the remaining observations $\bm{X}_\textrm{out}=\{\bm{x}_i|\, i \in \mathcal{I}_\textrm{out}\}$ (test set). Let us call $\bm{V}_\textrm{in}$ the eigenvectors of $\bm{E}_\textrm{in}$ ranked by increasing eigenvalues. Then the holdout estimator is \citep{Lam2016}
    
\begin{equation}
\label{holdout_def}
\bm{\Xi}^\textrm{H} := \bm{\Xi}^O(\bm{E}_\textrm{in},\bm{E}_\textrm{out})=\bm{V}_\textrm{in}\diag(\bm{V}_\textrm{in}^{T}\bm{E}_\textrm{out}\bm{V}_\textrm{in})\bm{V}_\textrm{in}^{T}.
    \end{equation}

    \begin{flushright}
        (holdout estimator)
    \end{flushright}
\end{definition}

The formerly defined estimator is not a RIE, in fact, it does not use the sample eigenvectors. Instead, from the following definition, we can define a RIE  

\begin{definition}
Let us consider $\bm{X}=\{\bm{x}_{1}, ..., \bm{x}_{t}\}$ an i.i.d. set of centered observations of population covariance $\bm{\Sigma}$ and sample covariance $\bm{E}$. Let $t_\textrm{out}$ an integer between $1$ and $t-1$,and  $\{\mathcal{I}_\textrm{in},\mathcal{I}_\textrm{out}\}$ an index partition (definition ~\eqref{def:partition}) defined from $t_\textrm{out}$.  Let us call $\bm{E}_\textrm{in}$ the sample covariance over the subset of observations $\bm{X}_\textrm{in}=\{\bm{x}_i|\, i \in \mathcal{I}_\textrm{in}\}$ (train set) and $\bm{E}_\textrm{out}$ the sample covariance on the remaining observations $\bm{X}_\textrm{out}=\{\bm{x}_i|\, i \in \mathcal{I}_\textrm{out}\}$ (test set). Let us call $\bm{V}_\textrm{in}$ and $\bm{V}$ the eigenvectors of $\bm{E}_\textrm{in}$ and  $\bm{E}$ respectively ranked according to their eigenvalues. Then the holdout eigenvalues are \citep{abadir2014design}
\begin{equation}    
\bm{\Lambda}^H(\bm{E}_\textrm{in},\bm{E}_\textrm{out})  = \diag(\bm{V_\textrm{in}}^{T}\bm{E}_\textrm{out}\bm{V}_\textrm{in}),
    \end{equation}
    \begin{flushright}    
        (\textit{holdout eigenvalues})
    \end{flushright}
and the rotational invariant holdout estimator is
\begin{equation}    
 \bm{\Xi}^{RH} :=  \bm{V} \bm{\Lambda}^H(\bm{E}_\textrm{in},\bm{E}_\textrm{out}) \bm{V}^{T}.
    \end{equation}
    \begin{flushright}    
        (\textit{rotational invariant holdout estimator})
    \end{flushright}
\end{definition}

Analogously, we present two versions of the $k$-fold CV.
\begin{definition}
    Let us consider $\bm{X}=\{\bm{x}_{1}, ..., \bm{x}_{t}\}$ an i.i.d. set of centered observations of covariance $\bm{\Sigma}$. Let $t_\textrm{out}$ be an integer between $1$ and $t-1$ such that $t_\textrm{out}$ divides $t$, and let $\{\mathcal{I}_{\textrm{out}, l}, \mathcal{I}_{\textrm{in}, l}\}$ be a $k$-fold index partition (definition~\eqref{def:kpartition}). For each $1 \leq l \leq k$, let $\bm{E}_{\textrm{in}, l}$ be the sample covariance over the observations $\bm{X}_{\textrm{in}, l} = \{ \bm{x}_i | i \in \mathcal{I}_{\textrm{in}, l} \}$ (train set) and $\bm{E}_{\textrm{out}, l}$ the sample covariance over the observations $\bm{X}_{\textrm{out}, l} = \{ \bm{x}_i | i \in \mathcal{I}_{\textrm{out}, l} \}$ (test set). Let $\bm{V}_{\textrm{in}, l}$ the eigenvectors of $\bm{E}_{\textrm{in}, l}$ ranked by increasing eigenvalues. Then the $k$-fold CV estimator is \citep{Lam2016}
\begin{equation}
\label{k_fold_cv_def}
\bm{\Xi}^\textrm{CV} = \frac{1}{k} \sum_{l=1}^{k} \bm{\Xi}_l^\textrm{H} = \frac{1}{k} \sum_{l=1}^k \bm{V}_{\textrm{in}, l}\diag(\bm{V}_{\textrm{in}, l}^{T}\bm{E}_{\textrm{out}, l}\bm{V}_{\textrm{in}, l})\bm{V}_{\textrm{in}, l}^{T}.
\end{equation}

\begin{flushright}
    ($k$-fold CV estimator)
\end{flushright}

\end{definition}

\begin{definition}
Let us consider $\bm{X}=\{\bm{x}_{1}, ..., \bm{x}_{t}\}$ an i.i.d. set of centered observations of population covariance $\bm{\Sigma}$ and sample covariance $\bm{E}$. Let $t_\textrm{out}$ be an integer between $1$ and $t-1$ such that $t_\textrm{out}$ divides $t$, and let $\{\mathcal{I}_{\textrm{in}, l},\mathcal{I}_{\textrm{out}, l}\}_{l=1}^{k}$ be a $k$-fold index partition (definition~\eqref{def:kpartition}). For each $1 \leq l \leq k$, let $\bm{E}_{\textrm{in}, l}$ be the sample covariance over the observations $\bm{X}_{\textrm{in}, l} = \{ \bm{x}_i | i \in \mathcal{I}_{\textrm{in}, l} \}$ (train set) and $\bm{E}_{\textrm{out}, l}$ the sample covariance over the observations $\bm{X}_{\textrm{out}, l} = \{ \bm{x}_i | i \in \mathcal{I}_{\textrm{out}, l} \}$ (test set). Let $\bm{V}_{\textrm{in}, l}$ and $\bm{V}$ be eigenvectors of $\bm{E}_{\textrm{in}, l}$ and $\bm{E}$ respectively ranked according to their eigenvalues. Then the $k$-fold CV eigenvalues are \citep{abadir2014design}
\begin{equation}
\bm{\Lambda}^{\textrm{CV}}=\frac{1}{k}\sum_{l=1}^k \bm{\Lambda}^H_l = \frac{1}{k}\sum_{l=1}^k \diag(\bm{V}_{\textrm{in}, l}^T \bm{E}_{\textrm{out}, l} \bm{V}_{\textrm{in}, l}).
\end{equation}

The rotational invariant $k$-fold CV estimator is then
\begin{equation}
    \bm{\Xi}^{\textrm{RCV}} = 
    \bm{V} \bm{\Lambda}^{\textrm{CV}} \bm{V}^T.
\end{equation}

\begin{flushright}
    (rotational invariant $k$-fold CV)
\end{flushright}

\end{definition}

We observed numerically that the first version of CV outperforms the second one; however, in this case, we do not have analytical proof. 

In the general case, the specific partitioning of the train and test indices can influence the performance of the estimator, unlike the i.i.d. setting, where only the size of the train and test sets matter.    

Before considering the computation of the expected error of estimation of the holdout estimator, let us first state the assumptions of the convergence result of \cite{Lam2016,lam2015supplement} for the holdout error towards the NLS error.
\begin{assumption}\label{assumptions_lam}\item[(H1)] The observations can be written as $\bm{X}=\sqrt{\bm{\Sigma_{n}}}\bm{Y}$ where $\bm{Y}$ is a $n\times t$ matrix of real or complex random variables with zero mean and unit variance such that $\mathbb{E}\left(\vert Y_{ij} \vert^{l} \right)\leq b<\infty$ for some constant $b$ and for $2<l\leq 20$. 
        \item[(H2)] The population covariance matrix $\bm{\Sigma}_{n}$ is non-random, $\Vert \bm{\Sigma}\Vert_{L^{2}}=\mathcal{O}(1)$ and $\bm{\Sigma}\neq \sigma^{2}\bm{\mathds{1}}$
    \item [(H3)] $\frac{n}{t}\to q>0$ as $n\to\infty$

    \item [(H4)] Let us consider $(\tau_{1}\dots \tau_{n})$ the eigenvalues of $\bm{\Sigma}_{n}$, the empirical spectral distribution of $\bm{\Sigma}_{n}$
    \begin{equation}
        H_{n}(\tau)=\frac{1}{n}\sum_{j=1}^{n}\mathds{1}_{[\tau_{j},+\infty[}(\tau)
    \end{equation}
    converges to a non-random limit $H(\tau)$ at every point of continuity of $H$. Moreover, H defines a probability distribution function whose support $Supp(H)$ is included in a compact interval $[h_{1},h_{2}]$ where $0<h_{1}\leq h_{2}<+\infty$.    
\end{assumption}

Let us now state \cite{Lam2016}'s convergence result in the asymptotic regime

\begin{theoreme}
\label{theorem_lam}
    Under the assumptions listed in \ref{assumptions_lam}, the holdout estimator error converges towards the error of the NLS, with respect to the Frobenius error and Stein's loss provided that the split satisfies $\frac{t_{\textrm{in}}}{t}\xrightarrow[t \to \infty]{} 1$, $t_{\textrm{out}} \xrightarrow[t \to \infty]{} \infty$ and
\begin{equation}
\label{eq:lam_condition}
   \sum_{t\geq 1} \frac{n} {t_{\textrm{out}}^{5}}<\infty. 
\end{equation}
\end{theoreme}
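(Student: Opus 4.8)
\section*{Proof proposal for Theorem \ref{theorem_lam}}

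The plan is to reduce the holdout estimator to the \emph{training-sample oracle} $\bm{\Xi}^O(\bm{E}_\textrm{in},\bm{\Sigma})$ and then import the already-known convergence of the oracle and of the NLS. The starting observation is that, since the data are i.i.d.\ and $\mathcal{I}_\textrm{in},\mathcal{I}_\textrm{out}$ are disjoint, $\bm{E}_\textrm{out}$ is independent of $\bm{E}_\textrm{in}$, hence of $\bm{V}_\textrm{in}$; conditioning on $\bm{X}_\textrm{in}$ and using $\mathbb{E}[\bm{E}_\textrm{out}\mid\bm{X}_\textrm{in}]=\bm{\Sigma}$ gives $\mathbb{E}[\bm{\Xi}^\textrm{H}\mid\bm{X}_\textrm{in}]=\bm{V}_\textrm{in}\diag(\bm{V}_\textrm{in}^{T}\bm{\Sigma}\bm{V}_\textrm{in})\bm{V}_\textrm{in}^{T}=\bm{\Xi}^O(\bm{E}_\textrm{in},\bm{\Sigma})$. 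The first real step is to show that the fluctuation around this conditional mean vanishes: writing $\bm{v}_i$ for the columns of $\bm{V}_\textrm{in}$,
\begin{equation}
\big\Vert \bm{\Xi}^\textrm{H} - \bm{\Xi}^O(\bm{E}_\textrm{in},\bm{\Sigma})\big\Vert_\textrm{F}^2 = \frac{1}{n}\sum_{i=1}^n \Big(\tfrac{1}{t_\textrm{out}}\!\!\sum_{j\in\mathcal{I}_\textrm{out}}\!\!\big[(\bm{v}_i^T\bm{x}_j)^2 - \bm{v}_i^T\bm{\Sigma}\bm{v}_i\big]\Big)^2 \xrightarrow[t\to\infty]{} 0 \quad \text{a.s.}
\end{equation}
Conditionally on $\bm{X}_\textrm{in}$ each inner sum is an average of $t_\textrm{out}$ i.i.d.\ centered terms with variance $\mathcal{O}(1/t_\textrm{out})$ (the relevant fourth moment is finite by \textbf{(H1)} and $\Vert\bm{v}_i^T\sqrt{\bm{\Sigma}}\Vert^2 = \bm{v}_i^T\bm{\Sigma}\bm{v}_i\le h_2$), so $L^2$ convergence already follows from $t_\textrm{out}\to\infty$; for the almost-sure statement one bounds a high even moment ($\mathbb{E}[D_{ii}^{10}]=\mathcal{O}(t_\textrm{out}^{-5})$, which uses the moment budget of \textbf{(H1)} up to order $20$) and then applies a union bound over the $n$ eigenvalue directions, producing a tail probability of order $n/t_\textrm{out}^{5}$; this is summable in $t$ exactly under \eqref{eq:lam_condition}, so Borel--Cantelli gives convergence that is moreover uniform over the $n$ directions.

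The second step is to show that replacing $\bm{E}$ by $\bm{E}_\textrm{in}$ does not change the limiting error. The training sample has aspect ratio $q_\textrm{in}=n/t_\textrm{in}$, and $t_\textrm{in}/t\to1$ forces $q_\textrm{in}\to q$; by the Ledoit--P\'ech\'e formula \eqref{ledoit_peche} the limiting Frobenius error (and Stein's loss) of the oracle is determined by $q$ and by the limiting population spectral law $H$ of \textbf{(H4)} alone, and one checks it depends continuously on $q$, so that
\begin{equation}
\tau\big((\bm{\Xi}^O(\bm{E}_\textrm{in},\bm{\Sigma}) - \bm{\Sigma})^2\big) - \tau\big((\bm{\Xi}^O(\bm{E},\bm{\Sigma}) - \bm{\Sigma})^2\big)\xrightarrow[t\to\infty]{} 0 \quad \text{a.s.}
\end{equation}
Finally, the Ledoit--P\'ech\'e theorem quoted above states that the NLS eigenvalues converge to the oracle eigenvalues in the high-dimensional limit, so the NLS error coincides asymptotically with the full-sample oracle error. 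Chaining the three comparisons --- holdout $\to$ training oracle $\to$ full-sample oracle $=$ NLS --- gives the Frobenius statement. The Stein's-loss version runs through the same three steps; the only extra ingredient is a uniform lower bound $\lambda_{\min}(\bm{\Xi}^\textrm{H})\ge c>0$ needed to control $\log\det$, which holds because $h_1>0$ in \textbf{(H4)} keeps the conditional-mean eigenvalues $\bm{v}_i^T\bm{\Sigma}\bm{v}_i\ge h_1$ bounded away from zero, while step one makes the (uniform) fluctuation negligible.

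I expect the main obstacle to be the uniform-in-$i$ control in the first step: the eigenvectors $\bm{v}_i$ of $\bm{E}_\textrm{in}$ are random and mutually dependent, and the summands $(\bm{v}_i^T\bm{x}_j)^2$ are only assumed to have finitely many moments rather than being sub-Gaussian, so the concentration has to be pushed through entirely by the moment method. Getting the correct power of $t_\textrm{out}$ in the tail bound --- which is what produces the precise summability condition \eqref{eq:lam_condition} through the interplay between the $n$-fold union bound and the order-$20$ moment budget of \textbf{(H1)} --- requires carefully tracking how the $\mathcal{O}(1)$ moment bound propagates through the even moment of an average of $t_\textrm{out}$ i.i.d.\ terms and then through the sum over directions; this is the technical heart of \cite{lam2015supplement} and the step where the delicate balance between $n$ and $t_\textrm{out}$ actually enters.
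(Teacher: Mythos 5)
This theorem is not proved in the paper at all: it is quoted verbatim from \cite{Lam2016,lam2015supplement}, and the summability condition \eqref{eq:lam_condition} is Lam's own hypothesis. Your sketch reconstructs essentially the argument of the cited source: the conditional-mean decomposition $\mathbb{E}[\bm{\Xi}^\textrm{H}\mid\bm{X}_\textrm{in}]=\bm{\Xi}^O(\bm{E}_\textrm{in},\bm{\Sigma})$, concentration of the test-sample quadratic forms $\bm{v}_i^T\bm{E}_\textrm{out}\bm{v}_i$ around $\bm{v}_i^T\bm{\Sigma}\bm{v}_i$ via a tenth-moment (Rosenthal/Markov) bound of order $t_\textrm{out}^{-5}$, a union bound over the $n$ directions and Borel--Cantelli --- which is precisely where the condition $\sum_t n\,t_\textrm{out}^{-5}<\infty$ and the order-$20$ moment budget of (H1) come from --- followed by the identification of the training-sample ideal loss with the full-sample one through $q_\textrm{in}\to q$ and the Ledoit--P\'ech\'e/NLS asymptotic equivalence. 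So at the level of strategy your proposal is faithful to the proof the paper is invoking.

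Two points deserve tightening if you were to write this out. First, turning $\Vert\bm{\Xi}^\textrm{H}-\bm{\Xi}^O(\bm{E}_\textrm{in},\bm{\Sigma})\Vert_\textrm{F}\to 0$ into equality of the limiting \emph{losses} still requires the cross term, i.e.\ a Cauchy--Schwarz step together with the (easy) fact that $\tau\bigl((\bm{\Xi}^O(\bm{E}_\textrm{in},\bm{\Sigma})-\bm{\Sigma})^2\bigr)$ stays bounded; you chain the three comparisons without saying this. Second, for Stein's loss your uniform lower bound relies on $\bm{v}_i^T\bm{\Sigma}_n\bm{v}_i\ge h_1$, but (H4) only constrains the \emph{limiting} spectral distribution $H$; at finite $n$ individual eigenvalues of $\bm{\Sigma}_n$ are not literally confined to $[h_1,h_2]$ by the assumptions as stated, so a uniform finite-$n$ eigenvalue bound (as assumed in Lam's original paper) has to be either added or extracted from (H2)/(H4) with more care. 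Neither issue changes the architecture of the argument, but both are gaps a referee of a self-contained proof would flag.
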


For example, $k=\alpha\sqrt{n}$ would imply $t_{\textrm{out}}$ proportional to the square root of $n$ and would satisfy the condition on the split written in eq.\ \eqref{eq:lam_condition} taking into account that $n=q t$ in the high-dimensional regime.

The last two assumptions are included in the assumptions of the Ledoit and Péché's formula \citep{ledoit2011eigenvectors}, moreover our setting of Gaussian data with an inverse Wishart population covariance matrix satisfies all of Lam's assumptions except that we did not exclude the case where the population covariance matrix is proportional to the identity. Therefore, we expect to observe the asymptotic convergence of the holdout Frobenius error towards the one of the NLS.

\section{Derivation of the Estimation Error for the Holdout}
\label{Section 3}

In the forthcoming section, we compute analytically the expected Frobenius error of the holdout estimator for Gaussian i.i.d. data. Since the general case is not fully analytical, we derived a closed form of the Frobenius estimator error when the population covariance is drawn from a white inverse Wishart distribution. In this regime, we can also identify the optimal split. 

Before carrying on the derivation of the error, it is worth having a look at the shapes of the Frobenius error as a function of the as a function of the train-test ratio factor $k=t/t_\textrm{out}$. In Fig. \ref{fig:sim_cv_holdout_errors_log_scale}, we show that either CV and Holdout methods exhibit a minimum error point that is not at an extreme value of $k$. Although the exact locations of the minimum can be different among the two methods, numerical simulations seem to indicate a similar scale factor. 
This discrepancy might arise from the distinct meaning of $k$ in each case: in the CV method, $k$ represents the number of splits, while in the Holdout method, it reflects only the imbalance between the sizes of the training and testing sets.

\begin{figure}[tbh]
        \includegraphics[width=0.495\columnwidth]{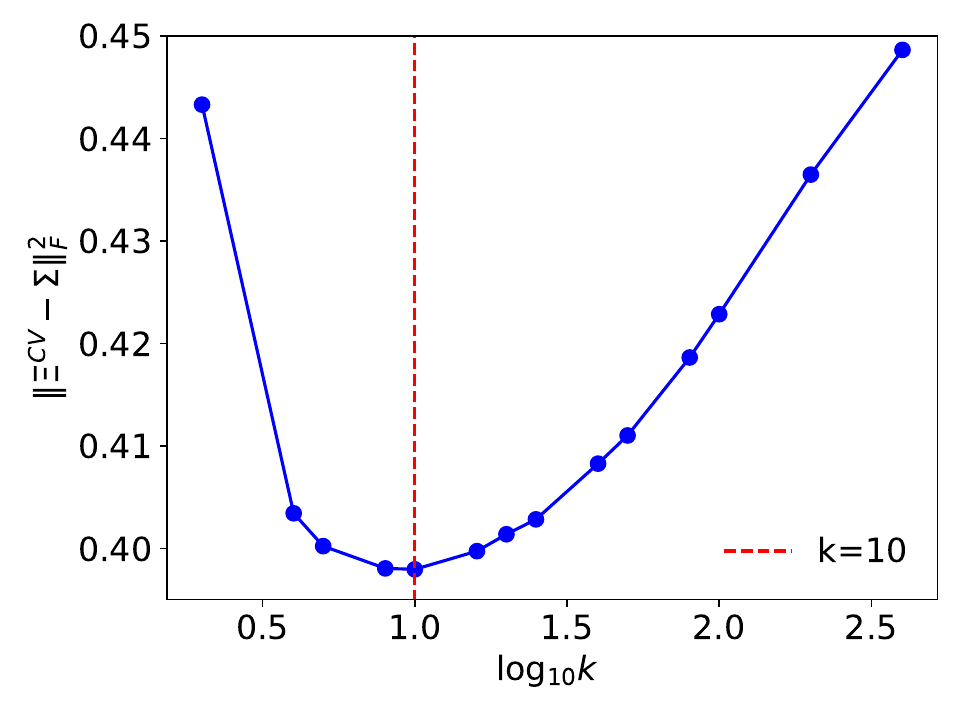}
        \includegraphics[width=0.495\columnwidth]{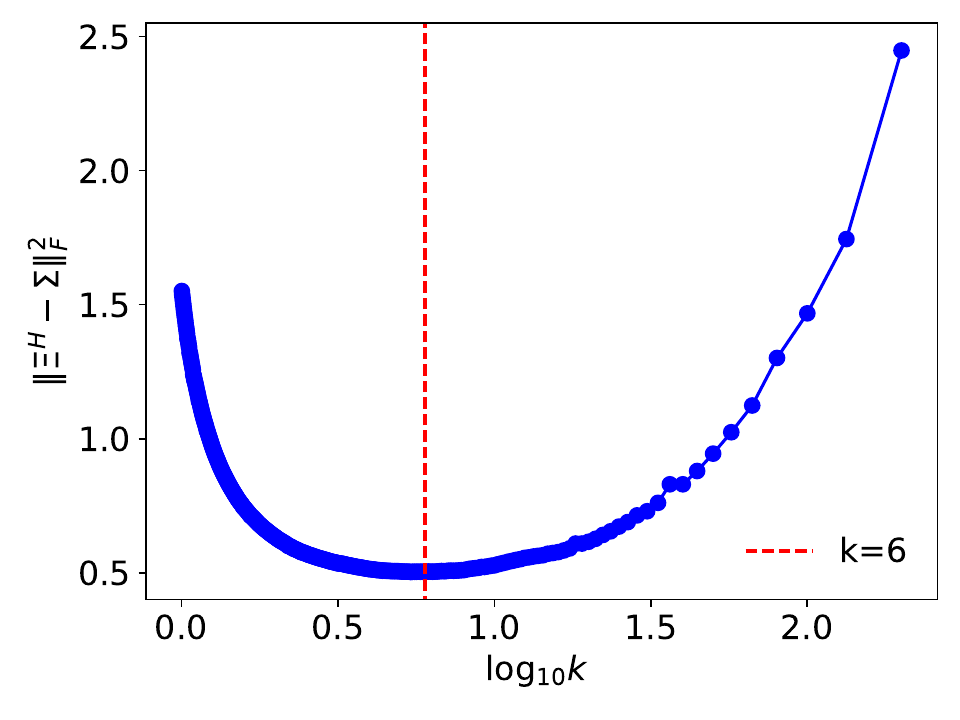}  
    \caption{The left panel shows the average Frobenius error between the population covariance and the CV estimator of equation~\eqref{k_fold_cv_def} as a function of the log of the $k$-fold. The right panel shows the average Frobenius error between the population covariance and the Holdout estimator of equation~\eqref{holdout_def} has a function of the log of the train-test ratio factor $k$, integers which divides $t$ the number of data points. The averages are computed with $100$ simulations of inverse Wishart population matrices with $n=200$ and $p=1.5$, with Gaussian data with $q=0.5$}\label{fig:sim_cv_holdout_errors_log_scale}
\end{figure}

\subsection{General case}

The proof involves two steps. The first one is to apply Wick's theorem to link the moments of the sample covariances to the population covariance matrix using the Gaussian assumption \citep{wick1950evaluation}. The second step is to recognize the oracle estimator eigenvalues for which we have a simple expression in the inverse Wishart case using the Ledoit-P\'ech\'e's formula \citep{Potters2020AFC}.

Let us now announce the main result of this paper.

\begin{proposition}
\label{prop1}
 Let us consider $\bm{\Sigma}$ a positive-defined matrix in $\mathbb{R}^{n \times n}$ and $(\bm{x}_{\textrm{i}})_{1\leq i\leq t}$ i.i.d. random vectors of $\mathbb{R}^{n}$ such that $\bm{x}_{i}\sim\mathcal{N}(0,\bm{\Sigma})$. Let $\bm{\Xi}^\textrm{H}$ the holdout estimator associated with the index partition $\{\mathcal{I}_\textrm{in},\mathcal{I}_\textrm{out}\}$ defined from $t_\textrm{out}$ integer between $1$ and $t-1$, with $\bm{V}_\textrm{in}$ the eigenvector of the sample covariance $\bm{E}_\textrm{in}$ on the train partition $\mathcal{I}_\textrm{in}$.  Then the expected Frobenius error between the holdout estimator and the population covariance $\bm{\Sigma}$ is

\begin{equation}
\mathbb{E}\left[\tau((\bm{\Xi}^\textrm{H}-\bm{\Sigma})^2)\right]=\left(\frac2{t_{\textrm{out} }}-1\right)\mathbb{E}\left[\tau(\diag(\bm{V}_\textrm{in}^{T}\bm{\Sigma} \bm{V}_\textrm{in})^2)\right]+\mathbb{E}\left[\tau(\bm{\Sigma}^2)\right].
\end{equation}
\end{proposition}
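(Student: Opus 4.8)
The plan is to expand the error into three traces and exploit the block structure of the estimator. Write $\bm{\Xi}^\textrm{H}=\bm{V}_\textrm{in}\bm{D}\bm{V}_\textrm{in}^{\top}$ with the \emph{diagonal} matrix $\bm{D}:=\diag(\bm{V}_\textrm{in}^{\top}\bm{E}_\textrm{out}\bm{V}_\textrm{in})$, and expand $\tau((\bm{\Xi}^\textrm{H}-\bm{\Sigma})^2)=\tau((\bm{\Xi}^\textrm{H})^2)-2\,\tau(\bm{\Xi}^\textrm{H}\bm{\Sigma})+\tau(\bm{\Sigma}^2)$. Since $\bm{V}_\textrm{in}$ is orthogonal, $(\bm{V}_\textrm{in}\bm{D}\bm{V}_\textrm{in}^{\top})^2=\bm{V}_\textrm{in}\bm{D}^2\bm{V}_\textrm{in}^{\top}$, so the first term is $\tau(\bm{D}^2)=\tfrac1n\sum_i D_{ii}^2$; and the cross term is $\tau(\bm{D}\,\bm{V}_\textrm{in}^{\top}\bm{\Sigma}\bm{V}_\textrm{in})=\tau(\bm{D}\,\diag(\bm{V}_\textrm{in}^{\top}\bm{\Sigma}\bm{V}_\textrm{in}))$ because $\bm{D}$ is diagonal. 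So the off-diagonal part of $\bm{V}_\textrm{in}^{\top}\bm{E}_\textrm{out}\bm{V}_\textrm{in}$ never enters, and only $\bm{D}$ and the oracle matrix $\diag(\bm{V}_\textrm{in}^{\top}\bm{\Sigma}\bm{V}_\textrm{in})$ remain.

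Next I would condition on the training sample, i.e.\ on $\bm{V}_\textrm{in}$, and use that $\bm{E}_\textrm{out}$ is independent of it with $\mathbb{E}[\bm{E}_\textrm{out}]=\bm{\Sigma}$. Writing $\bm{v}_i$ for the columns of $\bm{V}_\textrm{in}$ and $s_i:=\bm{v}_i^{\top}\bm{\Sigma}\bm{v}_i$ (the $i$-th oracle eigenvalue with respect to $\bm{\Sigma}$), this gives $\mathbb{E}[\bm{D}\mid\bm{V}_\textrm{in}]=\diag(\bm{V}_\textrm{in}^{\top}\bm{\Sigma}\bm{V}_\textrm{in})$, so the conditional expectation of the cross term is $\tfrac1n\sum_i s_i^2=\tau(\diag(\bm{V}_\textrm{in}^{\top}\bm{\Sigma}\bm{V}_\textrm{in})^2)$, which after the outer expectation produces the $-2$ contribution.

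The core computation is $\mathbb{E}[D_{ii}^2\mid\bm{V}_\textrm{in}]$. Here $D_{ii}=\bm{v}_i^{\top}\bm{E}_\textrm{out}\bm{v}_i=t_\textrm{out}^{-1}\sum_{a\in\mathcal{I}_\textrm{out}}(\bm{v}_i^{\top}\bm{x}_a)^2$, and conditionally on $\bm{V}_\textrm{in}$ the scalars $\bm{v}_i^{\top}\bm{x}_a$ are i.i.d.\ $\mathcal{N}(0,s_i)$; Wick's theorem (equivalently the variance $2t_\textrm{out}$ of a $\chi^2_{t_\textrm{out}}$) yields $\mathbb{E}[D_{ii}^2\mid\bm{V}_\textrm{in}]=(1+2/t_\textrm{out})\,s_i^2$, hence $\mathbb{E}[\tau(\bm{D}^2)\mid\bm{V}_\textrm{in}]=(1+2/t_\textrm{out})\,\tau(\diag(\bm{V}_\textrm{in}^{\top}\bm{\Sigma}\bm{V}_\textrm{in})^2)$. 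Combining the three terms and taking the expectation over $\bm{V}_\textrm{in}$, the coefficient of $\mathbb{E}[\tau(\diag(\bm{V}_\textrm{in}^{\top}\bm{\Sigma}\bm{V}_\textrm{in})^2)]$ collapses to $(1+2/t_\textrm{out})-2=2/t_\textrm{out}-1$, which is the claimed identity.

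The only real care needed is bookkeeping: checking that the off-diagonal entries of $\bm{V}_\textrm{in}^{\top}\bm{E}_\textrm{out}\bm{V}_\textrm{in}$ genuinely drop out (they do, via orthogonality and the $\diag(\cdot)$ operator), and getting the $1/t_\textrm{out}$ normalization right so that the Gaussian kurtosis factor $3$ produces exactly the $2/t_\textrm{out}$ excess over $s_i^2$. Notably, this is an \emph{exact} finite-$t$ identity valid for any positive-definite $\bm{\Sigma}$: no high-dimensional limit or random-matrix input is required here, and the white-inverse-Wishart / Ledoit--P\'ech\'e machinery only enters afterwards when $\mathbb{E}[\tau(\diag(\bm{V}_\textrm{in}^{\top}\bm{\Sigma}\bm{V}_\textrm{in})^2)]$ is evaluated in closed form.
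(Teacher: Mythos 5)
Your proof is correct and follows essentially the same route as the paper: expand the Frobenius error into three traces, use the independence of $\bm{E}_\textrm{out}$ from the training eigenvectors, and compute the second moment of $\bm{v}_i^{\top}\bm{E}_\textrm{out}\bm{v}_i$ via Gaussian moments to get the factor $1+2/t_\textrm{out}$ (your conditional $\chi^2_{t_\textrm{out}}$ argument is just a repackaging of the paper's Wick-theorem step). Your closing remark that the identity is exact at finite $t$ for any positive-definite $\bm{\Sigma}$, with the high-dimensional machinery entering only afterwards, is also consistent with how the paper uses the result.
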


\label{val_proof1}

\begin{proof}
Let us now prove the proposition by deriving the expression of the Frobenius error between the holdout estimator in equation~\ref{holdout_def} and the population matrix $\bm{\Sigma}$. The Frobenius error can be expanded as

\begin{equation}
\label{eq:frob_expansion}
\tau((\bm{\Xi}^\textrm{H}-\bm{\Sigma})^2)=\tau((\bm{\Xi}^\textrm{H})^2)-2\tau(\bm{\Xi}^\textrm{H}\bm{\Sigma})+\tau(\bm{\Sigma}^2).
\end{equation}

The first term can be expressed in terms of in-sample eigenvectors $\bm{V}_\textrm{in}$ and sample covariance $\bm{E}_\textrm{out}$ of the test partition $\mathcal{I}_\textrm{out}$. Given the independence by construction between these two quantities, the equation simplifies to

\begin{equation}
\label{eq_val1_xi_carre}
\mathbb{E}\left[\tau((\bm{\Xi}^\textrm{H})^2)\right]=\frac{1}{n}\sum_{i,j,k,l,m} \mathbb{E}\left[(V_\textrm{in})_{ji}(V_\textrm{in})_{ki}(V_\textrm{in})_{li}(V_\textrm{in})_{mi}\right]\mathbb{E}\left[(E_\textrm{out})_{jk}(E_\textrm{out})_{lm}\right].
\end{equation}

As our data are generated by a Gaussian multiplicative noise, Wick's theorem can be used to compute the moments of the out-of-sample covariance $\bm{E}_\textrm{out}$. Let us first remind the result.

\begin{theoreme}
    Let us consider $\{\bm{x}_{1},\bm{x}_2,..,\bm{x}_{r}\}$ i.i.d. centered Gaussian vectors of $\mathbb{R}^{n}$, then \citep{wick1950evaluation}
    
    \begin{equation}
    \mathbb{E}\left[ \bm{x}_{1}\bm{x}_2...\bm{x}_{r}\right]=\sum_{p\in\mathcal{P}_{r}^2}\prod_{\{i,j\}} \mathbb{E}\left[ \bm{x}_{i}\bm{x}_{j}\right]=\sum_{p\in\mathcal{P}_{r}^2}\prod_{\{i,j\}} Cov( \bm{x}_{i},\bm{x}_{j}).
    \end{equation}

    \begin{flushright}
        (Wick's Formula)
    \end{flushright}
\end{theoreme}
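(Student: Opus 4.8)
The plan is to prove Wick's formula by induction on the number of factors $r$, using Gaussian integration by parts as the engine of the recursion. I read the statement with the $\bm{x}_i$ denoting scalar jointly centered Gaussian variables (in the application of Eq.~\eqref{eq_val1_xi_carre} these are the individual entries of the out-of-sample data, which are correlated through $\bm{\Sigma}$), so that the product $\bm{x}_1\cdots\bm{x}_r$ is an ordinary product of scalars and $\text{Cov}(\bm{x}_i,\bm{x}_j)=\mathbb{E}[\bm{x}_i\bm{x}_j]$. First I would dispose of the odd case: when $r$ is odd the set of perfect matchings $\mathcal{P}_r^2$ is empty, and the left-hand side also vanishes because the joint law is invariant under $X\mapsto -X$, which flips the sign of an odd moment. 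Hence it suffices to treat $r=2m$.

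The key tool is the Gaussian integration-by-parts (Stein) identity: for a centered Gaussian vector $X=(X_1,\dots,X_r)$ with covariance $\Sigma_{ij}=\mathbb{E}[X_iX_j]$ and any polynomial $f$,
\begin{equation}
\mathbb{E}[X_1\,f(X)]=\sum_{j=1}^{r}\Sigma_{1j}\,\mathbb{E}\!\left[\partial_j f(X)\right].
\end{equation}
I would establish this identity either by differentiating the Gaussian density and integrating by parts, or equivalently by reading it off the characteristic function $\mathbb{E}[e^{\mathrm{i} t\cdot X}]=\exp(-\tfrac12 t^\top\Sigma t)$. Applying it with $f(X)=X_2 X_3\cdots X_r$ and using $\partial_j f=\prod_{k\neq 1,j}X_k$ yields the recursion
\begin{equation}
\label{eq:wick_recursion}
\mathbb{E}[X_1 X_2\cdots X_r]=\sum_{j=2}^{r}\Sigma_{1j}\;\mathbb{E}\!\Big[\textstyle\prod_{k\neq 1,j}X_k\Big],
\end{equation}
which expresses a moment of $r$ factors in terms of moments of $r-2$ factors.

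The induction then matches Eq.~\eqref{eq:wick_recursion} to the combinatorial sum. The base case $r=2$ is immediate, since $\mathcal{P}_2^2$ contains the single pairing $\{\{1,2\}\}$ and the formula reduces to $\mathbb{E}[X_1X_2]=\Sigma_{12}$. For the inductive step I would observe that every perfect matching of $\{1,\dots,r\}$ is obtained uniquely by first choosing the partner $j\in\{2,\dots,r\}$ of the element $1$ and then choosing a perfect matching of the remaining $r-2$ indices; this is a bijection between $\mathcal{P}_r^2$ and $\bigsqcup_{j=2}^{r}\mathcal{P}_{\{2,\dots,r\}\setminus\{j\}}^2$. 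Under this bijection the factor $\Sigma_{1j}$ in Eq.~\eqref{eq:wick_recursion} is exactly the contribution of the pair $\{1,j\}$, while the inductive hypothesis expands $\mathbb{E}[\prod_{k\neq 1,j}X_k]$ as the sum over matchings of the remaining indices; collecting terms reproduces $\sum_{p\in\mathcal{P}_r^2}\prod_{\{i,j\}}\Sigma_{ij}$, which is the claim.

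The main obstacle is establishing the integration-by-parts identity rigorously, in particular handling a possibly singular covariance $\Sigma$: the cleanest fix is to write $X=\Sigma^{1/2}Z$ with $Z$ a standard Gaussian, prove the identity for $Z$ from the one-dimensional by-parts formula $\mathbb{E}[Z_jg(Z)]=\mathbb{E}[\partial_j g(Z)]$, and push it through the linear change of variables. As a consistency check, I would note that the purely algebraic generating-function route---extracting $\mathbb{E}[X_{i_1}\cdots X_{i_r}]=(-\mathrm{i})^r\,\partial_{t_{i_1}}\cdots\partial_{t_{i_r}}\exp(-\tfrac12 t^\top\Sigma t)\big|_{t=0}$ and noting that each surviving term pairs up two derivatives to pull down one factor $\Sigma_{ij}$---gives the same sum over $\mathcal{P}_r^2$ with no appeal to densities, and could replace the induction entirely.
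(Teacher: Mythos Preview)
Your proof is correct and follows the standard route to Wick's formula: Stein's integration-by-parts identity to set up the recursion \eqref{eq:wick_recursion}, then an induction matched against the combinatorial decomposition of perfect matchings by the partner of the element~$1$. The handling of the odd case, the base case, and the extension to possibly singular covariance via $X=\Sigma^{1/2}Z$ are all sound, and the generating-function alternative you sketch is an equally valid replacement.

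There is nothing to compare against, however: the paper does not prove this theorem. Wick's formula is stated inside the proof of Proposition~\ref{prop1} purely as a tool, with a citation to \citep{wick1950evaluation}, and is then applied without further argument to compute $\mathbb{E}[(E_\textrm{out})_{jk}(E_\textrm{out})_{lm}]$. So your proposal supplies a proof where the paper simply invokes a classical result. Your reading of the statement is also the right one for the application: despite the wording ``i.i.d.\ centered Gaussian vectors,'' what is actually used in Eq.~\eqref{eq_val1_xi_carre} is the scalar version for jointly Gaussian (and correlated) entries of $\bm{X}_\textrm{out}$, exactly as you note.
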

By applying the Wick's theorem, we obtain

\begin{equation}
\mathbb{E}\left[(E_\textrm{out})_{jk}(E_\textrm{out})_{lm}\right]=\frac{1}{(t_\textrm{out})^2}\sum_{n_{1},n_2}^{t_{\textrm{out} }} \mathbb{E}\left[ \Sigma_{jk}\Sigma_{lm} + \delta_{n_{1}n_2} \left(\Sigma_{jm}\Sigma_{kl} + \Sigma_{jl}\Sigma_{km} \right) \right],
\end{equation}
and by re-injecting the previous equation in equation \eqref{eq_val1_xi_carre} 
\begin{equation}
\mathbb{E}\left[\tau((\bm{\Xi}^\textrm{H})^2)\right]=\frac{1}{n}\sum_{i,j,k,l,m} \mathbb{E}\left[(V_\textrm{in})_{ji}(V_\textrm{in})_{ki}(V_\textrm{in})_{li}(V_\textrm{in})_{mi}\right]\left(1+\frac2{t_\textrm{out}}\right)\mathbb{E}\left[\Sigma_{jk}\Sigma_{lm}\right].
\end{equation}

Therefore,
\begin{equation}
\label{eq:app_wick1}
\mathbb{E}\left[\tau((\bm{\Xi}^\textrm{H})^2)\right]=\left(1+\frac2{t_\textrm{out}}\right)\mathbb{E}\left[\tau(\diag(\bm{V}_\textrm{in}^{T}\bm{\Sigma} \bm{V}_\textrm{in})^2)\right].
\end{equation}

By applying again Wick's theorem, we can also obtain
\begin{equation} \label{eq:app_wick2}
    \mathbb{E}\left[\tau(\bm{\Xi}^\textrm{H} \bm{\Sigma})\right]=\mathbb{E}\left[\tau(\diag(\bm{V}_\textrm{in}^{T}\bm{\Sigma} \bm{V}_\textrm{in})^2)\right].
\end{equation}

Then by combining equation~\eqref{eq:app_wick1} and equation~\eqref{eq:app_wick2}, the holdout Frobenius error can be re-written
\begin{align}
\label{eq:after_wick}
\mathbb{E}\left[\tau((\bm{\Xi}^\textrm{H}-\bm{\Sigma})^2)\right]&=\left(\frac2{t_{\textrm{out} }}-1\right)\mathbb{E}\left[\tau(\diag(\bm{V}_\textrm{in}^{T}\bm{\Sigma} \bm{V}_\textrm{in})^2)\right]+\mathbb{E}\left[\tau(\bm{\Sigma}^2)\right]  \\
&= \left(\frac2{t_{\textrm{out} }}-1\right)\mathbb{E}\left[\tau(\bm{\Lambda}^O(\bm{V}_\textrm{in},\bm{\Sigma})^2))\right]+\mathbb{E}\left[\tau(\bm{\Sigma}^2)\right] .
\end{align}

\end{proof}

 From this expression, we recognize the oracle eigenvalues $\bm{\Lambda}^{O}$ of equation~\eqref{eq:oracle}. In the high dimension limit, we can use the Ledoit P\'ech\'e formula (equation~\ref{ledoit_peche}) for $\bm{\Lambda}^{O}$. The formula is exact in the high dimension limit but remains a very good approximation when the dimensions, $n$ and $t$ are finite but large \citep{ledoit2012nonlinear}. 
 The formula in equation~\eqref{eq:after_wick} is the general holdout error for Gaussian multiplicative noise and establishes a quantitative link with the oracle estimator eigenvalues and it is not restricted to the inverse Wishart case. One can compute the holdout error for any population covariance matrix as long as one can apply the Ledoit-P\'ech\'e formula written in equation~\eqref{ledoit_peche}. Therefore when $\bm{\Sigma}$ has a spectral density which converges a.s. at every point of continuity to a non-random limit when $n$ tends to $\infty$ and so that the support of the limiting distribution is included in a compact interval \citep{ledoit2011eigenvectors}.  Moreover, this equation can be solved numerically for any population covariance verifying these regularity conditions \citep{ledoit2012nonlinear, ledoit2017direct}. 
 
It is interesting to observe that both traces can be expressed in terms of expectation and variance of the eigenvalue distributions, in fact
\begin{equation}
    \mathbb{E}\left[\tau(\bm{\Sigma}^2)\right]=\mathbb{E}\left[ \bm{\lambda}^\textrm{true}\right]^2+\mathbb{V}\left[\bm{\lambda}^\textrm{true}\right]
\end{equation}
where $\mathbb{E}\left[ \bm{\lambda}^\textrm{true}\right]=1$ if we consider correlation matrices, and
\begin{equation}
    \mathbb{E}\left[\tau(\bm{\Lambda}^O(\bm{V}_\textrm{in},\bm{\Sigma}))^2)\right]=\mathbb{E}\left[ \bm{\lambda}^{\textrm{true}}\right]^2+\mathbb{V}\left[\bm{\lambda}^\textrm{O}\right].
\end{equation}
In fact, the oracle eigenvalue procedure does not modify the mean of the sample eigenvalue distribution $\mathbb{E}\left[ \bm{\lambda}\right]$ but shrinks the sample variance, i.e., $0\leq\mathbb{V}\left[\bm{\lambda}^\textrm{O}\right]\leq \mathbb{V}\left[\bm{\lambda}\right]$.
From the last two relationship, equation~\eqref{eq:after_wick} can be written as

\begin{equation}
    \mathbb{E}\left[\tau((\bm{\Xi}^\textrm{H}-\bm{\Sigma})^2)\right]=\mathbb{V}\left[\bm{\lambda}^\textrm{true}\right] - \mathbb{V}\left[\bm{\lambda}^\textrm{O}\right] + 2 \frac{\mathbb{E}\left[ \bm{\lambda}^\textrm{true}\right]^2+\mathbb{V}\left[\bm{\lambda}^\textrm{O}\right]}{t_\textrm{out}}.
\end{equation}

\subsection{White inverse Wishart case}
If the population matrix is drawn from a white inverse Wishart, then equation~\eqref{eq:after_wick} has a closed form in the high dimensional limit.

\begin{proposition}
\label{prop_1var}
 Let us consider $\bm{\Sigma} \sim \mathcal{W}^{-1}_{np}$ a white inverse Wishart of parameters $(n,p)$, with $p$ negligible relative to $n$, and $(\bm{x}_{\textrm{i}})_{1\leq i\leq t}$ i.i.d. random vectors of $\mathbb{R}^{n}$ such that $\bm{x}_{i}\sim\mathcal{N}(0,\bm{\Sigma})$.  Let $\bm{\Xi}^\textrm{H}$ the holdout estimator associated with the index partition $\{\mathcal{I}_\textrm{in},\mathcal{I}_\textrm{out}\}$ defined from $t_\textrm{out}$ integer between $1$ and $t-1$. The expected Frobenius error between the holdout estimator and the population covariance $\bm{\Sigma}$ is

\begin{equation}
\label{eq:result_error_holdout_asymptotics}
\mathbb{E}\left[\tau((\bm{\Xi}^\textrm{H}-\bm{\Sigma})^2)\right]=\left(\frac{2k}{t}-1\right) \left(\frac{p^2}{p+\frac{kn}{kt-t}}+1\right)+1+p.
\end{equation}
\end{proposition}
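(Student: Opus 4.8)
The plan is to specialise the general formula from Proposition~\ref{prop1} to the white inverse Wishart case. Starting from
\[
\mathbb{E}\left[\tau((\bm{\Xi}^\textrm{H}-\bm{\Sigma})^2)\right]=\left(\frac2{t_{\textrm{out}}}-1\right)\mathbb{E}\left[\tau(\bm{\Lambda}^O(\bm{V}_\textrm{in},\bm{\Sigma})^2)\right]+\mathbb{E}\left[\tau(\bm{\Sigma}^2)\right],
\]
I need three ingredients: (i) the second moment $\mathbb{E}[\tau(\bm{\Sigma}^2)]$, which by Eq.~\eqref{eq:sigma_square} is $1+p$ in the high-dimensional limit with $p$ negligible relative to $n$; (ii) the identification $t_\textrm{out}=t/k$, so that $2/t_\textrm{out}=2k/t$, giving the prefactor $(2k/t-1)$; and (iii) the oracle second moment $\mathbb{E}[\tau(\bm{\Lambda}^O(\bm{V}_\textrm{in},\bm{\Sigma})^2)]$, which is the only nontrivial piece.

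For ingredient (iii), the key observation is that the oracle eigenvalues $\bm{\Lambda}^O(\bm{V}_\textrm{in},\bm{\Sigma})$ are built from the \emph{train} eigenvectors $\bm{V}_\textrm{in}$, which diagonalise $\bm{E}_\textrm{in}$, the sample covariance over $t_\textrm{in}=t-t_\textrm{out}$ observations of a population drawn from $\mathcal{W}^{-1}_{np}$. The effective aspect ratio of this train block is $q_\textrm{in}=n/t_\textrm{in}=n/(t-t/k)=kn/(kt-t)$. In the high-dimensional limit the Ledoit--P\'ech\'e formula (Theorem, Eq.~\eqref{ledoit_peche}) applied to a white inverse Wishart population is a linear shrinkage coinciding with the oracle, so $\bm{\Lambda}^O$ has the same first moment as $\bm{\Sigma}$ (namely $1$, since $\mathbb{E}[\bm{\Sigma}]=\bm{\mathds{1}}$) and a reduced variance. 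Using the decomposition $\mathbb{E}[\tau(\bm{\Lambda}^O)^2)]=\mathbb{E}[\bm{\lambda}^\textrm{true}]^2+\mathbb{V}[\bm{\lambda}^\textrm{O}]$ established just after Proposition~\ref{prop1}, together with the known form of the optimal (oracle/linear-shrinkage/Bayesian) shrinkage intensity for the white inverse Wishart—which shrinks the excess variance $p$ of the population spectrum by a factor involving $q_\textrm{in}$—I expect $\mathbb{V}[\bm{\lambda}^\textrm{O}]=p^2/(p+q_\textrm{in})=p^2/\big(p+\tfrac{kn}{kt-t}\big)$, hence $\mathbb{E}[\tau(\bm{\Lambda}^O)^2)]=1+\dfrac{p^2}{p+\frac{kn}{kt-t}}$. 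This is exactly the parenthesised factor in Eq.~\eqref{eq:result_error_holdout_asymptotics}. Substituting (i)--(iii) back into the general formula and collecting terms yields the claimed expression.

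The main obstacle is ingredient (iii): pinning down the oracle second moment for a white inverse Wishart population seen through $t_\textrm{in}$ samples. Concretely this requires knowing that for $\bm{\Sigma}\sim\mathcal{W}^{-1}_{np}$ the Ledoit--P\'ech\'e eigenvalues reduce to the linear shrinkage $\xi_\lambda = (1-\beta) + \beta\lambda$ with shrinkage coefficient $\beta$ depending on $p$ and the train aspect ratio, and then computing its variance. I would derive this either by directly evaluating the Stieltjes transform $g_{\bm{E}_\textrm{in}}$ for a white inverse Wishart population (whose spectral density is known in closed form) and plugging into Eq.~\eqref{ledoit_peche}, or—more economically—by invoking the known result (cited as \citep{Potters2020AFC,Potters2020AFCbis,llamrani}) that the oracle Frobenius error for this ensemble equals $pq_\textrm{in}/(p+q_\textrm{in})$ and using $\mathbb{E}[\tau((\bm{\Xi}^O-\bm{\Sigma})^2)]=\mathbb{V}[\bm{\lambda}^\textrm{true}]-\mathbb{V}[\bm{\lambda}^\textrm{O}]$ (the error equals the variance reduction) together with $\mathbb{V}[\bm{\lambda}^\textrm{true}]=p$ from Eq.~\eqref{eq:sigma_square}; this immediately gives $\mathbb{V}[\bm{\lambda}^\textrm{O}]=p-pq_\textrm{in}/(p+q_\textrm{in})=p^2/(p+q_\textrm{in})$, confirming the formula without re-deriving the shrinkage. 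A secondary point to be careful about is the interplay of the two limits—the $p$-negligible-relative-to-$n$ simplification of $\mathbb{E}[\tau(\bm{\Sigma}^2)]$ and the high-dimensional limit used for Ledoit--P\'ech\'e—but since both are invoked in the hypotheses of Proposition~\ref{prop_1var}, they can be applied termwise.
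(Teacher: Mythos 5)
Your proposal is correct and follows the same overall structure as the paper: specialise Proposition~\ref{prop1}, set $t_\textrm{out}=t/k$, use $\mathbb{E}[\tau(\bm{\Sigma}^2)]=1+p$, and reduce everything to the oracle second moment $\mathbb{E}[\tau(\bm{\Lambda}^O(\bm{V}_\textrm{in},\bm{\Sigma})^2)]$ for the train block with aspect ratio $q_\textrm{in}=kn/(kt-t)$. Where you diverge is in how that middle term is evaluated. The paper substitutes the explicit linear-shrinkage form of the oracle, $\bm{\Xi}^{O}=r_\textrm{in}\bm{E}_\textrm{in}+(1-r_\textrm{in})\bm{\mathds{1}}$ with $r_\textrm{in}=p/(p+q_\textrm{in})$, expands the square, and uses $\mathbb{E}[\tau(\bm{E}_\textrm{in}^2)]=1+p+q_\textrm{in}$ and $\mathbb{E}[\tau(\bm{E}_\textrm{in})]=1$, obtaining $r_\textrm{in}^2(p+q_\textrm{in})+1=1+p^2/(p+q_\textrm{in})$. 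Your ``economical'' route instead backs the same quantity out of Property~\ref{prop:oracle_invW_gaussian_error} applied to the train block, via the identity $\mathbb{E}[\tau((\bm{\Xi}^O-\bm{\Sigma})^2)]=\mathbb{V}[\bm{\lambda}^\textrm{true}]-\mathbb{V}[\bm{\lambda}^O]$ (which is exact, since $\tau(\bm{\Xi}^O\bm{\Sigma})=\tau((\bm{\Xi}^O)^2)$ and the oracle preserves the mean) together with $\mathbb{V}[\bm{\lambda}^\textrm{true}]=p$, giving $\mathbb{V}[\bm{\lambda}^O]=p^2/(p+q_\textrm{in})$. Both routes rest on the same underlying cited fact (oracle $=$ linear shrinkage for a white inverse Wishart in the high-dimensional limit, with $q$ replaced by $q_\textrm{in}$), so the content is equivalent; your shortcut avoids expanding the shrinkage but hides where the hypothesis ``$p$ negligible relative to $n$'' enters, which in the paper's derivation is visible as the validity condition for $\mathbb{E}[\tau(\bm{E}_\textrm{in}^2)]=1+p+q_\textrm{in}$ (and hence for the final formula's bias at larger $p/n$). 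A minor point either way: applying the high-dimensional oracle results to the train block tacitly requires $t_\textrm{in}\to\infty$ as well, which both you and the paper assume implicitly.
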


To prove it, we first note that when the population matrix is a white inverse Wishart and the data are Gaussian, the  oracle estimator recovers the optimal linear shrinkage in the Bayesian sense, in the high dimension limit when $p$ is negligible compared to $n$ and is equal to \citep{Potters2020AFC}
\begin{equation}
\label{eq:linear_shrink}
\bm{\Xi}^{\textrm{O}}=r\bm{E}+(1-r)\bm{\mathds{1}}, \hspace{4mm} \mbox{with:} \hspace{4mm} r=\frac{p}{p+q}.
\end{equation}

\begin{proof}

Let us apply the linear shrinkage formula in equation~\eqref{eq:linear_shrink} to the oracle term appearing in equation~\eqref{eq:after_wick}
\begin{equation}
   \mathbb{E}\left[\tau(\diag(\bm{V}_\textrm{in}^{T}\bm{\Sigma} \bm{V}_\textrm{in})^2)\right] =\mathbb{E}\left[(\tau (r_\textrm{in}\bm{E}_\textrm{in}+(1-r_\textrm{in})\bm{\mathds{1}})^2) \right],
\end{equation}

where $r_\textrm{in}$ is obtained by using $q_\textrm{in}=n/t_\textrm{in}$ instead of $q$. The former equation can then be expanded in the following way
\begin{equation}
   \label{eq:application_ls_formula}
   \mathbb{E}\left[\tau(\diag(\bm{V}_\textrm{in}^{T}\bm{\Sigma} \bm{V}_\textrm{in})^2)\right] =\mathbb{E}\left[r_\textrm{in}^2\tau(\bm{E}_{in}^2) + 2r_\textrm{in}(1-r_\textrm{in})\tau(\bm{E}_\textrm{in}) + (1-r_\textrm{in})^2\right].
\end{equation}

Since the data are Gaussian, one can combine equations \eqref{eq:sigma_square} and ~\eqref{eq:trace_samplecov_square} if $p$ is negligible relative to $n$, then

\begin{equation}\label{eq:critialp}
    \mathbb{E}\left[\tau(\bm{E}_\textrm{in}^2) \right] = 1+p+ q_\textrm{in},
\end{equation}

and being the population matrix a white inverse Wishart,  from the definition~\eqref{def:white_inv} and Eq.\eqref{eq:exp_trace_invW}, we have 
\begin{equation}
    \mathbb{E}\left[\tau(\bm{E}_\textrm{in}) \right] = 1.
\end{equation}

Therefore equation~\eqref{eq:application_ls_formula} becomes

\begin{equation}
   \label{eq:oracle_invW_ls}\mathbb{E}\left[\tau(\diag(\bm{V}_\textrm{in}^{T}\bm{\Sigma} \bm{V}_\textrm{in})^2)\right] =r_\textrm{in}^2\left(p+q_\textrm{in}\right) + 1. 
\end{equation}

By substituting the linear shrinkage formula of equation~\eqref{eq:oracle_invW_ls} into the expression of the holdout Frobenius error of equation~\eqref{eq:after_wick} we obtain 

\begin{equation}
\mathbb{E}\left[\tau(\bm{\Xi}^\textrm{H}-\bm{\Sigma})^2)\right]=\left(\frac2{t_\textrm{out}}-1\right)\left[r_\textrm{in}^2\left(p+q_\textrm{in}\right) + 1\right]+ 1+p,
\end{equation}

Expressing the former expression as a function of $k$ we obtain the expected error of the holdout estimator for the white inverse Wishart case in the high-dimension limit

\begin{equation}
\label{eq:holdout_error_p_negligible}
\mathbb{E}\left[\tau(\bm{\Xi}^\textrm{H}-\bm{\Sigma})^2)\right]=\left(\frac{2k}{t}-1\right) \left(\frac{p^2}{p+\frac{kn}{kt-t}}+1\right)+1+p.    
\end{equation}
However, the former equation, requires that $p$ is negligible with respect to $n$ for the equation~\eqref{eq:critialp} to be valid.
\end{proof}

In the left panel of Fig.~\ref{fig:expected_vs_realized_holdout_error}, we compared the theoretical error formula Eq.~\eqref{eq:result_error_holdout_asymptotics} with extensive Monte Carlo estimations. The theoretical error shows a good agreement with the data when $p$ is negligible compared to $n$. However, when $p$ is larger compared to $n$, the error formula shows a systematic underestimation of the error. This behavior appears clear since only the points associated with $p/n>10^{-2}$ are biased. The reason is probably that both in the optimal linear shrinkage and in our derivation, equation~\ref{eq:tsq}, requires $p$ significantly smaller than $n$. 

In the right panel of Fig.~\ref{fig:expected_vs_realized_holdout_error}, we show the theoretical and Monte Carlo error as a function of $k$ for a specific set of $\{n, p, q\}$. Interestingly, even though for our choice of parameters, a small bias is still observable, the position of the minimum seems not affected. This motivates us to use Eq.~\eqref{eq:result_error_holdout_asymptotics} to derive an analytical expression for the optimal $k$.

\begin{figure}[tbh]
        \includegraphics[width=0.495\columnwidth]{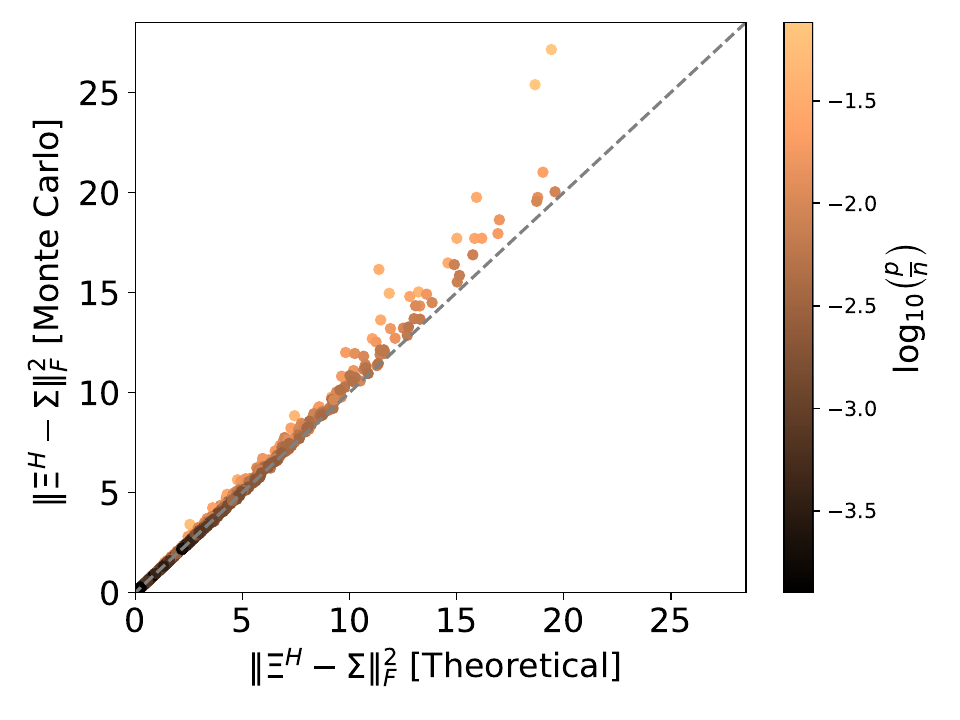}  
        \includegraphics[width=0.495\columnwidth]{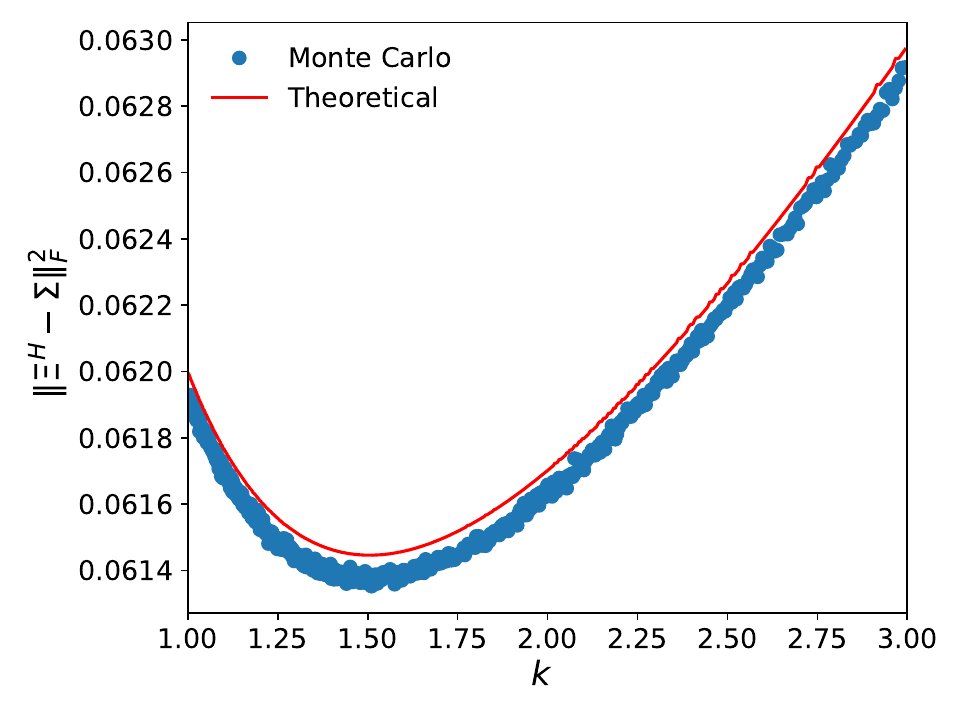}
    \caption{The left panel shows the comparison of the theoretical error of Eq.~\eqref{eq:result_error_holdout_asymptotics} with a Monte Carlo estimations for a random uniform selection of the parameters: $n \in [ 100,1000 ]$, $p\in [0.1,9 ]$, $q\in [0.1,0.9]$ and $k$ among the divisors of $t$. For stability, every combination of parameters is repeated  $100$ times, and the estimated error is averaged. The right panel shows the theoretical Frobenius error and the Monte Carlo estimation as a function of $k$ for $n=750$, $t=1000$, and $p=0.06$  averaged $1000$ times.} 
    \label{fig:expected_vs_realized_holdout_error}
\end{figure} 

\subsubsection{Optimal split}
One interesting consequence of the expression of the holdout error is the computation of the optimal number $k$ that minimizes it.

\begin{corollary} 
    Under the assumptions of proposition \eqref{prop_1var}, the $k$ that minimizes the Frobenius error of estimation is
   \begin{equation}
    \label{eq:k_opt}
    k_{\textrm{opt}} =\frac{p \left(2q + p \left(2 + 2p + q\right) + \sqrt{p^2 q^2 + 2n(p + q)(p + p^2 + q)}\right)}{2(p + q)(p + p^2 + q)}.
    \end{equation}
\end{corollary}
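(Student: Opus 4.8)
The plan is to treat Eq.~\eqref{eq:result_error_holdout_asymptotics} as a one-variable function of $k$ (with $n,p,q$ fixed, and $t=n/q$) and locate its unique interior minimum by solving the first-order condition. First I would write the error as $f(k) = \left(\frac{2k}{t}-1\right)\left(\frac{p^2}{p+\frac{kn}{kt-t}}+1\right)+1+p$. Since $\frac{kn}{kt-t} = \frac{kn}{t(k-1)} = \frac{kq}{k-1}$ after substituting $t=n/q$, the quantity $\frac{p^2}{p+\frac{kq}{k-1}} = \frac{p^2(k-1)}{p(k-1)+kq}$ is a ratio of affine functions of $k$, so $f$ is (up to the affine prefactor $\frac{2k}{t}-1$) a rational function of $k$ of low degree. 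Clearing the denominator $D(k) := p(k-1)+kq = (p+q)k - p$, I would write $t\,f(k) - t(1+p) = (2k-t)\left(\frac{p^2(k-1) + D(k)}{D(k)}\right)$, so the numerator $p^2(k-1)+D(k) = p^2 k - p^2 + (p+q)k - p = (p^2+p+q)k - (p^2+p)$ is also affine in $k$.

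Next I would compute $f'(k)$. Writing $f(k) = \frac{1}{t}\cdot \frac{(2k-t)\,N(k)}{D(k)} + 1+p$ with $N(k) = (p^2+p+q)k - (p^2+p)$ and $D(k) = (p+q)k-p$, the derivative's numerator (after multiplying by $t\,D(k)^2$) is $\big[2N(k) + (2k-t)N'(k)\big]D(k) - (2k-t)N(k)D'(k)$, which is a quadratic polynomial in $k$. Setting it to zero gives a quadratic equation $a k^2 + b k + c = 0$; the minimizing root is the one that makes $f$ smaller (equivalently the root lying in the admissible range $k>1$), and that is exactly the root with the $+\sqrt{\cdot}$ sign. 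I would then identify $a,b,c$ in terms of $n,p,q$ (using $t=n/q$ to eliminate $t$), apply the quadratic formula, and simplify the discriminant; the claimed answer has discriminant proportional to $p^2q^2 + 2n(p+q)(p+p^2+q)$, so the bookkeeping should collapse to Eq.~\eqref{eq:k_opt} after factoring out common terms and a factor of $p$ from the numerator.

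The main obstacle I anticipate is purely algebraic: carefully expanding the quadratic $ak^2+bk+c$ without sign errors, substituting $t = n/q$ at the right moment, and then massaging the discriminant $b^2-4ac$ into the clean form $p^2q^2 + 2n(p+q)(p+p^2+q)$ (times a perfect square that cancels against $2a$). It is easy to end up with a discriminant that looks different but is in fact equal after using $t=n/q$; I would double-check by verifying the expression at a convenient special value (e.g.\ $q\to\infty$ or $p$ small) and by confirming numerically against the minimum seen in the right panel of Fig.~\ref{fig:expected_vs_realized_holdout_error}. A secondary point to address is well-definedness: I should check that $a>0$ (so the critical point is a minimum of the relevant branch), that $D(k)>0$ on the admissible range, and that $k_{\textrm{opt}}>1$ so the split is legitimate — these follow from $p,q>0$ and $n\geq 1$, and from the fact that the $+\sqrt{\cdot}$ root dominates the linear terms. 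Finally, since $k$ must be an integer dividing $t$, I would remark (as the surrounding text does) that Eq.~\eqref{eq:k_opt} gives the continuous optimizer, to be rounded to the nearest admissible divisor in practice.
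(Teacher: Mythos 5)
Your approach is exactly the paper's proof, which simply sets the derivative of Eq.~\eqref{eq:result_error_holdout_asymptotics} with respect to $k$ to zero and solves the resulting second-order polynomial, and your reduction to the affine functions $N(k)=(p^2+p+q)k-(p^2+p)$ and $D(k)=(p+q)k-p$ is the right bookkeeping. One warning: executing your own outline, the first-order condition is $2N(k)D(k)+(2k-t)p^2q=0$, i.e.\ $2(p+q)(p^2+p+q)k^2-4p(p^2+p+q)k+2p^2(p+1)-np^2=0$, whose admissible root $k=\frac{p\left(2(p^2+p+q)+\sqrt{2(p^2+p+q)\left(n(p+q)-2pq\right)}\right)}{2(p+q)(p^2+p+q)}$ agrees with Eq.~\eqref{eq:k_opt} only at leading order in $n$, so do not expect the algebra to collapse exactly onto the displayed formula (e.g.\ for $n=200$, $p=1.5$, $q=0.5$ the true minimizer of Eq.~\eqref{eq:result_error_holdout_asymptotics} is $k\approx5.89$ while Eq.~\eqref{eq:k_opt} gives $k\approx5.96$).
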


\begin{proof}
This expression is directly found by setting the first derivative of the Frobenius error with respect to $k$ to zero and solving a second-order polynomial.
\end{proof}

The leading term of the asymptotic expansion of Eq.~\eqref{eq:k_opt}, for $n \to \infty$ simplifies to

\begin{equation}
k_{\textrm{opt}}\sim\frac{p}{\sqrt{2(p + q)(p + p^2 + q)}} \sqrt{n}.
\end{equation}

Contrary to the standard CV practice of using a fixed train-test ratio, our analysis proves that the optimal holdout split scales proportionally with the square root of the number of features $n$ in the high-dimensional limit.

Then, we quantify the performances of the holdout estimator by comparing its expected Frobenius error with the one of the oracle estimator. We recall that the oracle estimator is the RIE that minimizes by construction the Frobenius error.  For a white inverse Wishart population matrix $\bm{\Sigma}$ when  $p$ is negligible compared to $n$ and $1\ll k\ll n$, the two errors coincide in the high dimensional limit.

\begin{corollary}
    Under the assumptions of proposition \ref{prop_1var}, if $1\ll k\ll n$, then the holdout estimator recovers the expected Frobenius error of the oracle estimator in the high dimension limit

    \begin{equation}
    \lim\limits_{n \to \infty}\mathbb{E}\left[\tau((\bm{\Xi}^{\textrm{H}}-\bm{\Sigma})^2)\right] = \lim\limits_{n \to \infty} \mathbb{E}\left[\tau(\bm{\Xi}^{\textrm{\textrm{O}}}-\bm{\Sigma})^2)\right].
    \end{equation}
    
\end{corollary}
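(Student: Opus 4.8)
The plan is to start from the closed-form expression for the holdout error in Eq.~\eqref{eq:result_error_holdout_asymptotics} and to take the limit $n\to\infty$ under the scaling regime $1\ll k\ll n$, then compare the resulting value with the oracle error $pq/(p+q)$ given in Property~\ref{prop:oracle_invW_gaussian_error}. First I would rewrite the term $\frac{kn}{kt-t}=\frac{kn}{t(k-1)}=q\,\frac{k}{k-1}$, so that the holdout error becomes
\begin{equation}
\mathbb{E}\left[\tau((\bm{\Xi}^\textrm{H}-\bm{\Sigma})^2)\right]=\left(\frac{2k}{t}-1\right)\left(\frac{p^2}{p+q\frac{k}{k-1}}+1\right)+1+p.
\end{equation}
The assumption $k\gg1$ makes $\frac{k}{k-1}\to1$, so the inner bracket tends to $\frac{p^2}{p+q}+1$. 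The assumption $k\ll n$ together with $n=qt$ means $k/t\to0$, so the prefactor $\frac{2k}{t}-1\to-1$.

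Next I would collect the limiting value: the error tends to $-\left(\frac{p^2}{p+q}+1\right)+1+p = p-\frac{p^2}{p+q} = \frac{p(p+q)-p^2}{p+q}=\frac{pq}{p+q}$. This is exactly the oracle error from Eq.~\eqref{er_lp}, which establishes the claimed equality of limits. I would also make sure the two correction terms arising from the $k$-dependence — namely the $O(k/t)$ term from the prefactor and the $O(1/k)$ term from $\frac{k}{k-1}-1$ — are genuinely negligible; both vanish precisely because of the double scaling $1\ll k\ll n$, which is why that hypothesis is the sharp one. It may be worth remarking that the two correction terms pull in opposite directions (one from making $t_\textrm{out}$ too small, one from making $t_\textrm{out}$ too large), which is consistent with the existence of an interior minimum at $k_\textrm{opt}\sim\sqrt{n}$ found in the preceding corollary, and indeed $\sqrt{n}$ sits comfortably in the window $1\ll k\ll n$.

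The main obstacle is not the algebra, which is routine, but making precise what ``$1\ll k\ll n$'' means as a limiting statement: strictly one should take a sequence $k=k(n)$ with $k(n)\to\infty$ and $k(n)/n\to0$ (equivalently $k(n)/t\to0$), and then argue that $\mathbb{E}\left[\tau((\bm{\Xi}^\textrm{H}-\bm{\Sigma})^2)\right]\to pq/(p+q)$ along any such sequence. One caveat to flag is that Eq.~\eqref{eq:result_error_holdout_asymptotics} was itself derived under the requirement that $p$ be negligible compared to $n$ (needed for Eq.~\eqref{eq:critialp} and for the linear-shrinkage identity Eq.~\eqref{eq:linear_shrink}), and that the oracle error formula Eq.~\eqref{er_lp} carries the same proviso; so the corollary inherits the hypothesis $p\ll n$ stated in Proposition~\ref{prop_1var}, and the limit should be read as $n\to\infty$ with $q$ fixed and $p$ fixed (hence automatically negligible). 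Once these scaling conventions are pinned down, the proof is just the substitution and simplification above.
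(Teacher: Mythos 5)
Your proposal is correct and follows essentially the same route as the paper: take the two limits $\frac{2k}{t}-1\to-1$ and $\frac{p^2}{p+q k/(k-1)}+1\to\frac{p^2}{p+q}+1$ in Eq.~\eqref{eq:result_error_holdout_asymptotics} under $1\ll k\ll n$, and simplify to $\frac{pq}{p+q}$, matching Eq.~\eqref{er_lp}. Your added care in formalizing the double scaling as a sequence $k(n)$ and in flagging the inherited hypothesis $p\ll n$ is a welcome refinement but does not change the argument.
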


\begin{proof}
    By assuming that $1\ll k\ll n$ in the high-dimensional limit we have the following limits 
    \begin{equation}
        \frac{2k}{t}-1 \to -1,
    \quad\textrm{and }\quad
        \frac{p^{2}}{p+\frac{nk}{kt-1}} + 1\to \frac{p^{2}}{p+q} + 1.
    \end{equation}

Then the holdout error formula of Eq.~\eqref{eq:result_error_holdout_asymptotics} converges to

\begin{align*}
     \left(\frac{2k}{t}-1\right) \left(\frac{p^2}{p+\frac{kn}{kt-t}}+1\right)+1+p \to -\left(\frac{p^{2}}{p+q} +1 \right) + 1 + p = \frac{pq}{p+q},
\end{align*}

which coincides with Eq.~\eqref{er_lp}, and proves the convergence to the expected Frobenius error of the oracle estimator.
\end{proof}

 This result was expected considering \cite{Lam2016,lam2015supplement} convergence result written in therorem \ref{theorem_lam}, but in our setting we can be a bit less restrictive about the choice of the split in the high-dimensional regime. For example for a $k$ proportional to $n^{2/3}$, we would obtain a $t_{\textrm{out}}$ proportional to $n^{1/3}$ which would not verify the convergence assumption of Lam but still verifies the assumptions of the previous corollary.

However, it is important to notice that the holdout method is defined only for $k > 1$. For $k >2$ the test set is smaller than the train set, which is the typical use in the applications. When $k$ is an integer $\{2, 3,.., t\}$, there is a direct mapping between the train-test factor ratio of the holdout method and the number of folds of a $k$-fold CV. 

It is worth remarking that the minimum in the right panel of Fig.~\ref{fig:expected_vs_realized_holdout_error}, around $k=1.5$, is a prediction confirmed by our formula written in Eq.~\eqref{eq:k_opt}. Such a minimum is counterintuitive since for $k=1.5$, the prediction of the formula implies a test set double the train set.

\section{Conclusion}

In this work, we computed analytically the expected Frobenius error of the holdout estimator for Gaussian stationary data. We obtained a closed-form expression for the case of white inverse Wishart population matrices, while for the general case numerical approximations are needed. Our approach involved the use of Wick's theorem to link the Frobenius error of estimation with the moments of the population covariance matrix and then the recognition of the oracle eigenvalues. Furthermore, the method developed by Ledoit and P\'ech\'e for computing the oracle eigenvalues can be applied to a wider range of population covariance matrix distributions beyond the inverse Wishart case. This suggests that our approach could be extended to derive analytical formulas for the holdout error in these more general settings, opening up interesting avenues for future research.

With our approach, we found that the optimal split that minimizes the estimation error is proportional to the square root of the matrix dimension. The appearance here of a power law should be further investigated in future works as they often hold significance for applications and one can wonder whether a similar pattern would remain valid for other noise models and population covariance distributions. In the finite but large limit, the error sharpens around one optimal split instead of the plateau obtained in the asymptotic regime, making the choice of the split particularly important. This paves the way for further theoretical studies that could in the future help practitioners calibrate the holdout or CV estimators in the finite-sample case instead of fitting the split empirically directly on the data.

In addition, our findings indicate that, in general, the $k$-fold CV outperforms the holdout; however, when the order of the matrix tends to infinity, the holdout estimator converges to the oracle estimator, and $k$-fold CV might become irrelevant, as a single holdout split can achieve approximately the optimal performance of the oracle estimator. This allows for the prevention of data leakage from future eigenvectors to past eigenvectors, a safeguard that is impossible to implement with $k$-fold CV but naturally achievable with the holdout method. We believe this feature is particularly relevant in fields like finance, where preventing data leakage from future information is crucial for reliable model validation.

\subsection*{Acknowledgments}
We would like to thank Jean-Philippe Bouchaud, Damien Challet and Sandrine P\'ech\'e for particularly useful discussions and advice which helped improve the paper.

\bibliographystyle{plainnat}
\bibliography{references.bib}

\begin{thebibliography}{47}
\providecommand{\natexlab}[1]{#1}
\providecommand{\url}[1]{\texttt{#1}}
\expandafter\ifx\csname urlstyle\endcsname\relax
  \providecommand{\doi}[1]{doi: #1}\else
  \providecommand{\doi}{doi: \begingroup \urlstyle{rm}\Url}\fi

\bibitem[Abadir et~al.(2014)Abadir, Distaso, and
  {\v{Z}}ike{\v{s}}]{abadir2014design}
Karim~M Abadir, Walter Distaso, and Filip {\v{Z}}ike{\v{s}}.
\newblock Design-free estimation of variance matrices.
\newblock \emph{Journal of Econometrics}, 181\penalty0 (2):\penalty0 165--180,
  2014.

\bibitem[Alvarez et~al.(2014)Alvarez, Niemi, and Simpson]{alvarez2014bayesian}
Ignacio Alvarez, Jarad Niemi, and Matt Simpson.
\newblock Bayesian inference for a covariance matrix.
\newblock \emph{Conference on Applied Statistics in Agriculture}, 04 2014.

\bibitem[Bartz(2016)]{bartz2016advances}
Daniel Bartz.
\newblock \emph{Advances in High-Dimensional Covariance Matrix Estimation}.
\newblock PhD thesis, Technische Universit{\"a}t Berlin, 2016.

\bibitem[Beltrachini et~al.(2013)Beltrachini, von Ellenrieder, and
  Muravchik]{beltrachini2013shrinkage}
Leandro Beltrachini, Nicol{\'a}s von Ellenrieder, and Carlos~Horacio Muravchik.
\newblock Shrinkage approach for spatiotemporal eeg covariance matrix
  estimation.
\newblock \emph{IEEE Transactions on Signal Processing}, 61\penalty0
  (7):\penalty0 1797--1808, 2013.

\bibitem[Berrar(2019)]{berrar2019cross}
Daniel Berrar.
\newblock Cross-validation.
\newblock In \emph{Encyclopedia of Bioinformatics and Computational Biology},
  pages 542--545. Academic Press, Oxford, 2019.

\bibitem[Bongiorno and Berritta(2023)]{bongiorno2023optimal}
Christian Bongiorno and Marco Berritta.
\newblock Optimal covariance cleaning for heavy-tailed distributions: Insights
  from information theory.
\newblock \emph{Physical Review E}, 108\penalty0 (5):\penalty0 054133, 2023.

\bibitem[Bongiorno and Lamrani(2024)]{bongiorno2023quantifying}
Christian Bongiorno and Lamia Lamrani.
\newblock Quantifying the information lost in optimal covariance matrix
  cleaning.
\newblock \emph{Physica A: Statistical Mechanics and its Applications},
  657:\penalty0 130225, 2024.
\newblock ISSN 0378-4371.

\bibitem[Bongiorno et~al.(2022)Bongiorno, Miccich{\`e}, and
  Mantegna]{bongiorno2022statistically}
Christian Bongiorno, Salvatore Miccich{\`e}, and Rosario~N Mantegna.
\newblock Statistically validated hierarchical clustering: Nested partitions in
  hierarchical trees.
\newblock \emph{Physica A: Statistical Mechanics and its Applications},
  593:\penalty0 126933, 2022.

\bibitem[Bongiorno et~al.(2023)Bongiorno, Challet, and
  Loeper]{bongiorno2023filtering}
Christian Bongiorno, Damien Challet, and Gr{\'e}goire Loeper.
\newblock Filtering time-dependent covariance matrices using time-independent
  eigenvalues.
\newblock \emph{Journal of Statistical Mechanics: Theory and Experiment},
  2023\penalty0 (2):\penalty0 023402, 2023.

\bibitem[Bouchaud and Potters(2011)]{bouchaud2009financial}
Jean-Philippe Bouchaud and Marc Potters.
\newblock Financial applications of random matrix theory: a short review.
\newblock In G.~Ackemann, J.~Baik, and P.~Di Francesco, editors, \emph{The
  Oxford Handbookof Random Matrix Theory}. Oxford University Press, 2011.

\bibitem[Bradshaw et~al.(2023)Bradshaw, Huemann, Hu, and
  Rahmim]{bradshaw2023guide}
Tyler~J Bradshaw, Zachary Huemann, Junjie Hu, and Arman Rahmim.
\newblock A guide to cross-validation for artificial intelligence in medical
  imaging.
\newblock \emph{Radiology: Artificial Intelligence}, 5\penalty0 (4):\penalty0
  e220232, 2023.

\bibitem[Bun et~al.(2016)Bun, Allez, Bouchaud, and Potters]{bun2016rotational}
Jo{\"e}l Bun, Romain Allez, Jean-Philippe Bouchaud, and Marc Potters.
\newblock Rotational invariant estimator for general noisy matrices.
\newblock \emph{IEEE Transactions on Information Theory}, 62\penalty0
  (12):\penalty0 7475--7490, 2016.

\bibitem[Bun et~al.(2017)Bun, Bouchaud, and Potters]{bun2017cleaning}
Jo{\"e}l Bun, Jean-Philippe Bouchaud, and Marc Potters.
\newblock Cleaning large correlation matrices: tools from random matrix theory.
\newblock \emph{Physics Reports}, 666:\penalty0 1--109, 2017.

\bibitem[Bun et~al.(2018)Bun, Bouchaud, and Potters]{bun2018overlaps}
Jo{\"e}l Bun, Jean-Philippe Bouchaud, and Marc Potters.
\newblock Overlaps between eigenvectors of correlated random matrices.
\newblock \emph{Physical Review E}, 98\penalty0 (5):\penalty0 052145, 2018.

\bibitem[Daniels and Kass(1999)]{daniels1999nonconjugate}
Michael~J Daniels and Robert~E Kass.
\newblock Nonconjugate bayesian estimation of covariance matrices and its use
  in hierarchical models.
\newblock \emph{Journal of the American Statistical Association}, 94\penalty0
  (448):\penalty0 1254--1263, 1999.

\bibitem[De~Rooij and Weeda(2020)]{de2020cross}
Mark De~Rooij and Wouter Weeda.
\newblock Cross-validation: A method every psychologist should know.
\newblock \emph{Advances in Methods and Practices in Psychological Science},
  3\penalty0 (2):\penalty0 248--263, 2020.

\bibitem[Engel et~al.(2017)Engel, Buydens, and Blanchet]{engel2017overview}
Jasper Engel, Lutgarde Buydens, and Lionel Blanchet.
\newblock An overview of large-dimensional covariance and precision matrix
  estimators with applications in chemometrics.
\newblock \emph{Journal of chemometrics}, 31\penalty0 (4):\penalty0 e2880,
  2017.

\bibitem[Fama and French(1993)]{fama1993common}
Eugene~F Fama and Kenneth~R French.
\newblock Common risk factors in the returns on stocks and bonds.
\newblock \emph{Journal of financial economics}, 33\penalty0 (1):\penalty0
  3--56, 1993.

\bibitem[Fan et~al.(2008)Fan, Fan, and Lv]{fan2008high}
Jianqing Fan, Yingying Fan, and Jinchi Lv.
\newblock High dimensional covariance matrix estimation using a factor model.
\newblock \emph{Journal of Econometrics}, 147\penalty0 (1):\penalty0 186--197,
  2008.

\bibitem[Haff(1979)]{haff1979identity}
LR~Haff.
\newblock An identity for the wishart distribution with applications.
\newblock \emph{Journal of Multivariate Analysis}, 9\penalty0 (4):\penalty0
  531--544, 1979.

\bibitem[Laloux et~al.(1999)Laloux, Cizeau, Bouchaud, and
  Potters]{laloux1999noise}
Laurent Laloux, Pierre Cizeau, Jean-Philippe Bouchaud, and Marc Potters.
\newblock Noise dressing of financial correlation matrices.
\newblock \emph{Physical review letters}, 83\penalty0 (7):\penalty0 1467, 1999.

\bibitem[Lam(2015)]{lam2015supplement}
Clifford Lam.
\newblock Supplement to “nonparametric eigenvalue-regularized precision or
  covariance matrix estimator”, 2015.
\newblock Supplementary material.

\bibitem[Lam(2016)]{Lam2016}
Clifford Lam.
\newblock Nonparametric eigenvalue-regularized precision or covariance matrix
  estimator.
\newblock \emph{The Annals of Statistics}, 44\penalty0 (3):\penalty0 928--953,
  2016.
\newblock ISSN 00905364.

\bibitem[Lam(2020)]{lam2020high}
Clifford Lam.
\newblock High-dimensional covariance matrix estimation.
\newblock \emph{Wiley Interdisciplinary reviews: computational statistics},
  12\penalty0 (2):\penalty0 e1485, 2020.

\bibitem[Lamrani(2021)]{llamrani}
Lamia Lamrani.
\newblock Risk management of financial portfolios using tools from random
  matrix theory.
\newblock \emph{Master thesis report for ENSTA Paris. Unpublished}, page~17,
  2021.

\bibitem[Ledoit et~al.(2004)Ledoit, Wolf, and Honey]{ledoit2004shrunk}
O~Ledoit, M~Wolf, and I~Honey.
\newblock shrunk the covariance matrix-problems in mean-variance optimization.
\newblock \emph{The Journal of Portfolio Management}, 30\penalty0 (4):\penalty0
  110--119, 2004.

\bibitem[Ledoit and P{\'e}ch{\'e}(2011)]{ledoit2011eigenvectors}
Olivier Ledoit and Sandrine P{\'e}ch{\'e}.
\newblock Eigenvectors of some large sample covariance matrix ensembles.
\newblock \emph{Probability Theory and Related Fields}, 151\penalty0
  (1-2):\penalty0 233--264, 2011.

\bibitem[Ledoit and Wolf(2012)]{ledoit2012nonlinear}
Olivier Ledoit and Michael Wolf.
\newblock Nonlinear shrinkage estimation of large-dimensional covariance
  matrices.
\newblock \emph{Annals of Statistics}, 40:\penalty0 1024--1060, 2012.

\bibitem[Ledoit and Wolf(2017)]{ledoit2017direct}
Olivier Ledoit and Michael Wolf.
\newblock Direct nonlinear shrinkage estimation of large-dimensional covariance
  matrices.
\newblock Technical report, Working Paper, 2017.

\bibitem[Ledoit and Wolf(2022{\natexlab{a}})]{ledoit2022power}
Olivier Ledoit and Michael Wolf.
\newblock The power of (non-) linear shrinking: A review and guide to
  covariance matrix estimation.
\newblock \emph{Journal of Financial Econometrics}, 20\penalty0 (1):\penalty0
  187--218, 2022{\natexlab{a}}.

\bibitem[Ledoit and Wolf(2022{\natexlab{b}})]{ledoit2022quadratic}
Olivier Ledoit and Michael Wolf.
\newblock Quadratic shrinkage for large covariance matrices.
\newblock \emph{Bernoulli}, 28\penalty0 (3):\penalty0 1519--1547,
  2022{\natexlab{b}}.

\bibitem[Marcot and Hanea(2021)]{marcot2021optimal}
Bruce~G Marcot and Anca~M Hanea.
\newblock What is an optimal value of k in k-fold cross-validation in discrete
  bayesian network analysis?
\newblock \emph{Computational Statistics}, 36\penalty0 (3):\penalty0
  2009--2031, 2021.

\bibitem[Moore(2001)]{moore2001cross}
Andrew~W Moore.
\newblock Cross-validation for detecting and preventing overfitting.
\newblock \emph{School of Computer Science Carneigie Mellon University}, 133,
  2001.

\bibitem[M{\"o}rstedt et~al.(2024)M{\"o}rstedt, Lutz, and
  Neumann]{morstedt2024cross}
Torsten M{\"o}rstedt, Bernhard Lutz, and Dirk Neumann.
\newblock Cross validation based transfer learning for cross-sectional
  non-linear shrinkage: A data-driven approach in portfolio optimization.
\newblock \emph{European Journal of Operational Research}, 2024.

\bibitem[Muirhead(2009)]{muirhead2009aspects}
Robb~J Muirhead.
\newblock \emph{Aspects of multivariate statistical theory}.
\newblock John Wiley \& Sons, Hoboken, NJ, 2009.

\bibitem[Ottersten et~al.(1998)Ottersten, Stoica, and
  Roy]{ottersten1998covariance}
Bj{\"o}rn Ottersten, Peter Stoica, and Richard Roy.
\newblock Covariance matching estimation techniques for array signal processing
  applications.
\newblock \emph{Digital Signal Processing}, 8\penalty0 (3):\penalty0 185--210,
  1998.

\bibitem[Pantaleo et~al.(2011)Pantaleo, Tumminello, Lillo, and
  Mantegna]{pantaleo2011improved}
Ester Pantaleo, Michele Tumminello, Fabrizio Lillo, and Rosario~N Mantegna.
\newblock When do improved covariance matrix estimators enhance portfolio
  optimization? an empirical comparative study of nine estimators.
\newblock \emph{Quantitative Finance}, 11\penalty0 (7):\penalty0 1067--1080,
  2011.

\bibitem[Potters and Bouchaud(2020{\natexlab{a}})]{Potters2020AFC}
Marc Potters and Jean-Philippe Bouchaud.
\newblock \emph{A First Course in Random Matrix Theory}, pages 243--253.
\newblock Cambridge University Press, Cambridge; New York, NY,
  2020{\natexlab{a}}.

\bibitem[Potters and Bouchaud(2020{\natexlab{b}})]{Potters2020AFCbis}
Marc Potters and Jean-Philippe Bouchaud.
\newblock \emph{A First Course in Random Matrix Theory}, page 307.
\newblock Cambridge University Press, Cambridge; New York, NY,
  2020{\natexlab{b}}.

\bibitem[Tan and Zohren(2025{\natexlab{a}})]{tan2020estimation}
Vincent Tan and Stefan Zohren.
\newblock Estimation of large financial covariances: A cross-validation
  approach.
\newblock \emph{The Journal of Portfolio Management}, 51\penalty0 (4):\penalty0
  83--95, 2025{\natexlab{a}}.

\bibitem[Tan and Zohren(2025{\natexlab{b}})]{tan2025estimation}
Vincent Tan and Stefan Zohren.
\newblock Estimation of large financial covariances: A cross-validation
  approach.
\newblock \emph{Journal of Portfolio Management}, 51\penalty0 (4),
  2025{\natexlab{b}}.

\bibitem[Taylor et~al.(2013)Taylor, Joachimi, and Kitching]{taylor2013putting}
Andy Taylor, Benjamin Joachimi, and Thomas Kitching.
\newblock Putting the precision in precision cosmology: How accurate should
  your data covariance matrix be?
\newblock \emph{Monthly Notices of the Royal Astronomical Society},
  432\penalty0 (3):\penalty0 1928--1946, 2013.

\bibitem[Wick(1950)]{wick1950evaluation}
Gian-Carlo Wick.
\newblock The evaluation of the collision matrix.
\newblock \emph{Physical review}, 80\penalty0 (2):\penalty0 268, 1950.

\bibitem[Wong and Yeh(2019)]{wong2019reliable}
Tzu-Tsung Wong and Po-Yang Yeh.
\newblock Reliable accuracy estimates from k-fold cross validation.
\newblock \emph{IEEE Transactions on Knowledge and Data Engineering},
  32\penalty0 (8):\penalty0 1586--1594, 2019.

\bibitem[Yadav and Shukla(2016)]{yadav2016analysis}
Sanjay Yadav and Sanyam Shukla.
\newblock Analysis of k-fold cross-validation over hold-out validation on
  colossal datasets for quality classification.
\newblock In \emph{2016 IEEE 6th International conference on advanced computing
  (IACC)}, pages 78--83. IEEE, 2016.

\bibitem[Yates et~al.(2023)Yates, Aandahl, Richards, and Brook]{yates2023cross}
Luke~A Yates, Zach Aandahl, Shane~A Richards, and Barry~W Brook.
\newblock Cross validation for model selection: a review with examples from
  ecology.
\newblock \emph{Ecological Monographs}, 93\penalty0 (1):\penalty0 e1557, 2023.

\bibitem[Zhuang et~al.(2020)Zhuang, Yang, and Cordes]{zhuang2020technical}
Xiaowei Zhuang, Zhengshi Yang, and Dietmar Cordes.
\newblock A technical review of canonical correlation analysis for neuroscience
  applications.
\newblock \emph{Human brain mapping}, 41\penalty0 (13):\penalty0 3807--3833,
  2020.

\end{thebibliography}

\end{document}